\documentclass[11pt]{amsart}
\usepackage{amsmath,amssymb,latexsym,dsfont}
\usepackage[left=2.6cm,right=2.6cm,top=2.7cm,bottom=2.7cm]{geometry}

\usepackage{amsmath,amsfonts,amssymb,epsfig, textcomp}
\usepackage{graphicx,tikz}
\usepackage{hyperref, cleveref} 
\usepackage{cases,listings}
\usepackage{color}
\usepackage{mathabx}

\newtheorem{theorem}{Theorem}[section]
\newtheorem{lemma}{Lemma}[section]

\newtheorem{proposition}{Proposition}[section]

\newtheorem{remark}{Remark}[section]

\numberwithin{equation}{section}

      \newcommand{\hy}{\hat y}

   \newcommand{\ty}{\widetilde y}

      \newcommand{\cN}{{\mathcal N}}

      \newcommand{\N}{\mathbb{N}}

      \newcommand{\eps}{\varepsilon}
      \newcommand{\mR}{\mathbb{R}}
      \newcommand{\mZ}{\mathbb{Z}}

      \newcommand{\mC}{\mathbb{C}}

      \newcommand{\supp}{\operatorname{supp}}

      \newcommand{\M}{{\mathcal M}}

      \makeatletter
      \def\@setcopyright{}
      \def\serieslogo@{}
      \makeatother

\newcommand{\cG}{\mathcal G}

\newcommand{\cP}{\mathcal P}

\newcommand{\cL}{\mathcal L}

\newcommand{\cA}{{\mathcal A}}

\newcommand{\tz}{\widetilde z}

\newcommand{\be}{\begin{equation}}
\newcommand{\ee}{\end{equation}}

\newcommand{\cD}{{\mathcal D}}

\newcommand{\hz}{\hat z}

\title[Cost of fast controls for a KdV system]{Optimal bounds for the cost of fast controls of a KdV system}

\author[H.-M. Nguyen]{Hoai-Minh Nguyen}
\address[H.-M. Nguyen]{Sorbonne Universit\'e, Universit\'e Paris Cit\'e, CNRS, INRIA, \newline \indent 
Laboratoire Jacques-Louis Lions, LJLL, F-75005 Paris, France
}
\email{hoai-minh.nguyen@sorbonne-universite.fr}

\begin{document}

\maketitle 


\begin{abstract} We study the cost of fast controls for a linearized KdV system and a nonlinear KdV system locally, using right Neumann boundary control for non-critical lengths. Since the operator associated with the linearized system is neither self-adjoint nor skew-adjoint, its (known) spectral properties are not directly amenable to the moment method, leaving optimal cost bounds an open problem. We address this difficulty by shifting attention to a related KdV system and deriving the optimal bounds 
from the new one. 
\end{abstract}

\bigskip 

\noindent {\bf Key words.} control cost, moment method, KdV equation.  

\noindent {\bf AMS subject classification.} 93B05,  93C20,  35B40, 35Q53.

\tableofcontents


\section{Introduction}

This paper investigates the cost of fast control for the linearized and nonlinear Korteweg–de Vries (KdV) equations locally using right Neumann boundary controls. More precisely, concerning the linearized setting, given $L>0$, we consider the following control problem, for $T>0$,  
\begin{equation}\label{sys-KdV}\left\{
\begin{array}{cl}
y_t  + y_x  + y_{xxx}    = 0 &  \mbox{ in } (0, T) \times (0, L), \\[6pt]
y(\cdot, 0) = y(\cdot, L) = 0 & \mbox{ in }   (0, T), \\[6pt]
y_x(\cdot, L) = u & \mbox{ in }  (0, T), \\[6pt]
y(0, \cdot)  = y_0  &  \mbox{ in }  (0, L). 
\end{array}\right.
\end{equation}
Here  $y$ is the state, $y_0 \in L^2(0, L)$ is an initial datum,  and $u \in L^2(0, T)$ is a control.  The KdV equation has been introduced by Boussinesq \cite{1877-Boussinesq} and Korteweg
and de Vries \cite{KdV} as a model for the propagation of surface water waves along a
channel. This equation also furnishes a very useful nonlinear approximation model
including a balance between weak nonlinearity and weak dispersive effects, see, e.g.,~\cite{Whitham74, Miura76, Kato83}.  


For initial and final data in $L^2(0, L)$, and controls in $L^2(0, T)$, Rosier~\cite{Rosier97} proved  that the linearized KdV system \eqref{sys-KdV} with the right Neumann controls
is exactly controllable for any positive time $T$ provided that the length $L$ is not critical,
i.e., $L \notin \cN$, where \begin{equation}\label{def-cN}
\cN : = \left\{ 2 \pi \sqrt{\frac{k^2 + kl + l^2}{3}}; \, k, l \in \N_*\right\}.
\end{equation}
Rosier also established that when $L \in \cN$,  the linearized system is not null-controllable for any time $T>0$. More precisely, he showed
that there exists a non trivial, finite dimensional subspace $\M$  of $L^2(0, L)$  such that its orthogonal space is reachable from $0$ for small time whereas  $\M$ is not for any time.

Although the control system \eqref{sys-KdV} and its nonlinear version have been studied in a quite few works for some time, see, e.g., the surveys \cite{RZ09, Cerpa14}, to the best of our knowledge, there are no results on the cost of fast controls for this system when $L$ is not critical. Part of the reason will be explained below. The goal of this paper is to give an answer to this problem. 

Set, for $T>0$, 
\be
X_T = C([0, T]; L^2(0, L)) \cap L^2((0, T); H^1_0(0, L)), 
\ee
and equip this space with the corresponding norm, i.e., 
$$
\| y \|_{X_T} =\| y \|_{C([0, T]; L^2(0, L))} + \| y \|_{L^2((0, T); H^1(0, L))}. 
$$ 
Then $X_T$ is a Banach space.  It is known that  given $y_0 \in L^2(0, L)$ and $u \in L^2(0, T)$, there exists a unique weak solution $y \in X_T$ of \eqref{sys-KdV} (see, e.g., \cite{Rosier97,CC04}). 

\medskip 
We next state the main results of this paper. Concerning the linearized KdV system \eqref{sys-KdV}, we obtain the following result, which implies the optimal bound for the cost of fast controls.  

\begin{theorem} \label{thm-LN} Assume that $L \not \in \cN$ and $0< T < T_0$. We have
\begin{itemize}
\item[a)] Given $y_0 \in L^2(0, L)$,  there exists $ u \in L^2(0, T)$ such that  $y(T, \cdot) = 0$, where $y \in X_T$ is the unique solution of \eqref{sys-KdV}. Moreover, 
\be
\| u\|_{L^2(0, T)} \le C_1 e^{\frac{c_1}{T^{1/2}}} \| y_0 \|_{L^2(0, L)}.  
\ee

\item[b)] There exist $y_0 \in H^1_0(0, T)$ such that for $u \in L^2(0, T)$ being such that $y(T, \cdot) = 0$, where $y \in X_T$ is the unique solution of \eqref{sys-KdV}, one has, if $T_0$ is sufficiently small \footnote{The smallness of $T_0$ depending only on $L$.},
\be
\| u\|_{L^2(0, T)} \ge C_2 e^{\frac{c_2}{T^{1/2}}} \| y_0 \|_{L^2(0, L)}. 
\ee
\end{itemize}
Here $c_1, C_1$, $c_2, C_2$ are positive constants depending only on $L$ and $T_0$.  
\end{theorem}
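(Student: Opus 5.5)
The plan follows the abstract: rather than working directly with the non-self-adjoint operator $A y = -y' - y'''$ (with $y(0)=y(L)=y'(L)=0$), I would pass to a related KdV system whose spectral data can be handled by the moment method, and then transfer the bounds back to \eqref{sys-KdV}.

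\emph{Reduction.} A natural choice is to remove the transport term by the change of variables $z(t,x) = y(t, x + t)$. On a fixed interval this moves the boundary, so it is not directly usable. Instead I would use the scaling $t \mapsto \lambda t$, or the gauge transformation $y = e^{\alpha x + \beta t} z$, to obtain a system of the form $z_t + z_{xxx} + \gamma z_x + \delta z = 0$. For this system one can write explicit ``eigenfunctions'' built from the three roots of $\mu^3 + \gamma\mu + \lambda = 0$. The key is to pick an auxiliary problem, for example with a Dirichlet–Neumann structure on the adjoint side, whose eigenvalues $\lambda_k$ satisfy $\lambda_k = \pm i (2\pi k/L)^3 + O(k^2)$ or so, and whose eigenfunctions form a Riesz basis of $L^2(0,L)$. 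The non-criticality $L \notin \cN$ enters here: it guarantees that the relevant boundary functional $\varphi_k'(L)$ (the observation of the $k$-th mode) does not vanish and satisfies a lower bound $|\varphi_k'(L)| \gtrsim |k|^{-m}$.

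\emph{Upper bound (a).} I would run the moment method for the auxiliary system: null controllability of $y_0 = \sum a_k \varphi_k$ is equivalent to finding $u$ with $\int_0^T u(t) e^{\lambda_k (T-t)} \, dt \cdot \varphi_k'(L) = -e^{\lambda_k T} a_k$ for all $k$. I would construct a biorthogonal family to $\{e^{-\lambda_k t}\}$ in $L^2(0,T)$ with $\|\theta_k\| \le C e^{c/T^{1/2} + c|\lambda_k|^{1/3}}$. The exponent $1/2$ comes from the gap and counting structure: $\lambda_k \sim k^3$ means the counting function behaves like $N(r) \sim r^{1/3}$, and the standard Tenenbaum–Tucsnak / Seidman type estimates for $\Lambda \sim k^{\alpha}$ give cost $e^{c T^{-1/(\alpha-1)}} = e^{cT^{-1/2}}$ for $\alpha=3$. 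Here only $|\mathrm{Im}\,\lambda_k|$ grows like $k^3$, while the real parts matter for damping. I expect this to be the main obstacle: the eigenvalues are essentially on a line or sector but non-real, so I would use a complex-analytic construction with a multiplier $\prod (1 + z/\lambda_k)$ times a Paley–Wiener bump of type $\exp(\cdot)$ optimized in $T$. Summing $\sum |a_k| |e^{\lambda_k T}| \|\theta_k\| / |\varphi_k'(L)|$ then converges thanks to the decay $e^{\mathrm{Re}\,\lambda_k T}$ absorbing $e^{c|\lambda_k|^{1/3}}$, after a preliminary free evolution or by splitting $[0,T]$. Transferring back through the change of variables costs only bounded factors.

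\emph{Lower bound (b).} I would take $y_0$ concentrated on a single highly oscillating mode, or an explicit smooth profile. Pairing the controlled solution with adjoint solutions $e^{\bar\lambda_k t}\psi_k$ gives the identities $\int_0^T u\, e^{\lambda_k(T-t)}\, dt\, \varphi_k'(L) = -e^{\lambda_k T} a_k$. A Paley–Wiener argument then applies: the function $F(z) = \int_0^T u(t)e^{-izt}\,dt$ is entire of exponential type $T$ and prescribed at $z = i\lambda_k$. Combined with Phragmén–Lindelöf and the $k^3$ growth, this forces $\|u\| \ge C e^{c/T^{1/2}}$, choosing the mode $k \sim T^{-1/2}$ to balance $e^{-|\lambda_k|T}$ against the interpolation growth. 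This is where the smallness of $T_0$ is used.
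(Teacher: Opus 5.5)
Your proposal has a genuine gap at the one step that carries the whole argument: the passage to an auxiliary system and back. You never say which auxiliary problem you use, and none of the mechanisms you list can produce one with a Riesz basis of eigenfunctions.

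Rescaling $t$ or multiplying by $e^{\beta t}$ leaves the boundary conditions untouched. On the state side these are $y(0)=y(L)=0$ with the control on $y_x(L)$; on the adjoint side they are $\varphi(0)=\varphi(L)=\varphi'(0)=0$. So you keep the same non-normal spectral problem, up to a shift or a scalar factor. The weight $e^{\alpha x}$ with $\alpha\neq0$ creates a term $3\alpha z_{xx}$, so you do not even reach the form $z_t+z_{xxx}+\gamma z_x+\delta z=0$. Hence ``transferring back through the change of variables costs only bounded factors'' has nothing behind it.

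The missing idea is to change the \emph{control boundary condition}, not the variables: replace $y_x(\cdot,L)=u$ by $y_x(\cdot,L)-y_x(\cdot,0)=v$.
\begin{itemize}
\item The operator $w\mapsto -w'''-w'$ on $\{w\in H^3(0,L):\ w(0)=w(L)=0,\ w'(0)=w'(L)\}$ is skew-adjoint with compact resolvent. It therefore has an orthonormal eigenbasis with eigenvalues $i\lambda_k$, $\lambda_k=8\pi^3k^3/L^3+O(k^2)$.
\item An eigenfunction with $\varphi_k'(L)=0$ would also satisfy $\varphi_k'(0)=0$. This is ruled out by $L\notin\cN$ (Rosier's characterization of critical lengths), and the asymptotics give $|\varphi_k'(L)|\ge C|k|$.
\item The \emph{same trajectory} $y$ is driven to $0$ by $u$ in the original problem and by $v=u-y_x(\cdot,0)$ in the auxiliary one.
\item The cost transfer in both directions rests on the trace estimates $\|y_x(\cdot,0)\|_{L^2(0,T)}\le C(\|y_0\|_{L^2(0,L)}+\|v\|_{L^2(0,T)})$ for the auxiliary system and $\|y_x(\cdot,0)\|_{L^2(0,T)}\le C(\|y_0\|_{L^2(0,L)}+\|u\|_{L^2(0,T)})$ for the original one, with $C$ independent of $T\in(0,T_0)$.
\end{itemize}
For (a) this gives $\|u\|\le\|v\|+\|y_x(\cdot,0)\|\le Ce^{c/T^{1/2}}\|y_0\|$. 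For (b), with $y_0=\varphi_1$, it gives $(1+C)\|u\|+C\|y_0\|\ge\|v\|\ge C_2e^{c_2/T^{1/2}}\|y_0\|$. The smallness of $T_0$ serves only to absorb the term $C\|y_0\|$.

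Your moment-method sketch also brings in parabolic features that are absent here. The auxiliary eigenvalues are purely imaginary. So there is no factor $e^{\mathrm{Re}\,\lambda_kT}$ to absorb a growth $e^{c|\lambda_k|^{1/3}}$ of the biorthogonal family, and no $e^{-|\lambda_k|T}$ to balance in the lower bound.

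What you need instead is biorthogonal elements bounded by $Ce^{c/T^{1/2}}$ \emph{uniformly in $k$}. The paper builds them in Fourier variables as $\Psi_n(-z)H(z+\lambda_n)$. Here the product satisfies $|\Psi_n(-z)|\le Ce^{c|z+\lambda_n|^{1/3}}$ uniformly in $n$. The factor $H$ is the Fourier transform of a compactly supported Gevrey bump, normalized by $H(0)=1$ at the price $e^{c/T^{1/2}}$, and it decays fast enough to beat that growth. Paley--Wiener then gives the control.

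Summation over modes uses $1/|\varphi_k'(L)|\le C/|k|$. A merely polynomially decaying lower bound on $|\varphi_k'(L)|$, as you allow, would not suffice for $L^2$ data.

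The lower bound uses a fixed mode $y_0=\varphi_1$. The transform $\int_0^Tv(t)e^{-izt}\,dt$ must vanish at all but one of the points $-\lambda_k$, and the density of these zeros forces the cost; no choice of $k\sim T^{-1/2}$ is involved. If in (b) you meant to pair with adjoint eigenfunctions of the original problem, that route is blocked: little is known about the spectrum of $\cA$ beyond discreteness. This is precisely why the detour through the auxiliary system is needed.
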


Concerning the nonlinear KdV system, we have the following result. 

\begin{theorem} \label{thm-NL} Assume that $L \not \in \cN$ and $0< T < T_0$.  The following two facts hold:
\begin{itemize}
\item[a)] There exists a positive constant $\eps_0 = \eps_0 (T)$  such that given $y_0 \in L^2(0, L)$ with $\| y_0\|_{L^2(0, L)}  \le \eps_0$,  there exists $ u \in L^2(0, T)$ such that $y(T, \cdot) = 0$, where $y \in X_T$ is the unique solution of the nonlinear KdV system 
\be \label{sys-KdV-NL}
\left\{ \begin{array}{cl}
y_t + y_x + y_{xxx} + y y_x= 0 \mbox{ in } (0, T) \times (0, L), \\[6pt]
y(\cdot, 0) = y(\cdot, L) = 0 \mbox{ in } (0, T), \\[6pt]
y_x(\cdot, L) = u(t) \mbox{ in } (0, T), \\[6pt]
y(0,  \cdot) \mbox{ in } (0, L).
\end{array} \right. 
\ee
Moreover, 
\be
\| u\|_{L^2(0, T)} \le C_1 e^{\frac{c_1}{T^{1/2}}} \| y_0 \|_{L^2(0, L)}. 
\ee
\item[b)] There exists $\varphi \in H_0^1(0, L)$  and $\eps_1 = \eps_1 (T) > 0$ such that for $u \in L^2(0, T)$ with $\| u \|_{L^2(0, T)} \le \eps_1$ being such that $y(T, \cdot) = 0$ where $y \in X_T$ is the unique solution of \eqref{sys-KdV} with $y_0 = \eps \varphi$ and $0 < \eps < \eps_1$, one has, if $T_0$ is sufficiently small \footnote{The smallness of $T_0$ depending only on $L$.},
\be
\| u\|_{L^2(0, T)} \ge C_2 e^{\frac{c_2}{T^{1/2}}} \| y_0 \|_{L^2(0, L)}. 
\ee
\end{itemize}
Here $c_1, C_1$, $c_2, C_2$ are positive constants depending only on $L$ and $T_0$.  
\end{theorem}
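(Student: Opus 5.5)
The plan is to derive Theorem~\ref{thm-NL} from Theorem~\ref{thm-LN} by a perturbation argument, using the standard well-posedness and smoothing estimates for the KdV system in $X_T$. The key point is to track the $T$-dependence only through the control cost $C_1 e^{c_1/T^{1/2}}$. All other constants (in the linear estimate $\|y\|_{X_T}\le C(\|y_0\|_{L^2}+\|u\|_{L^2(0,T)})$ and in the bilinear estimate $\|yy_x\|_{L^1((0,T);L^2(0,L))}\le C T^{1/2}\|y\|_{X_T}^2$) are uniform for $T<T_0$.

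\emph{Part a).} I would use a fixed point argument, in the style of Rosier or Coron--Cr\'epeau. Let $\Gamma_T: y_0\mapsto u$ be the linear control map from Theorem~\ref{thm-LN} a), obtained by the HUM or moment construction. It is linear, and $\|\Gamma_T y_0\|\le C_1e^{c_1/T^{1/2}}\|y_0\|$.

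For a source term $f\in L^1((0,T);L^2(0,L))$, a control is also needed for the inhomogeneous problem. One route is to apply $\Gamma_T$ to the final state produced by the forcing, run backward. A simpler route uses the null controllability of the linear system with source, via Duhamel. Let $w$ solve the linear system with source $f$, zero initial data and zero control. Any $u$ steering the full system to zero must then cancel the sum of $e^{TA}y_0$ and $w(T)$. Since the system is exactly controllable, I would use the map $y_T\mapsto u$ for exact controllability at time $T$ (reachability from $0$), whose cost is also bounded by $Ce^{c/T^{1/2}}$. This follows from Theorem~\ref{thm-LN} a) via the reversibility $x\mapsto L-x$, $t\mapsto T-t$ applied to the adjoint system, or more simply by controlling on $[T/2,T]$ after a free evolution on $[0,T/2]$.

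Define $\Phi(y)$ as the solution of the linear system with source $-yy_x$ and control $u(y)=\Gamma_T y_0+\tilde\Gamma_T(\text{forcing residue})$. Then
\[
\|\Phi(y)\|_{X_T}\le Ce^{c/T^{1/2}}\bigl(\|y_0\|+\|y\|_{X_T}^2\bigr),
\]
and the contraction estimate is analogous. Choosing a ball of radius $2Ce^{c/T^{1/2}}\|y_0\|$ and $\eps_0(T)\sim e^{-3c/T^{1/2}}$ small gives a fixed point. Its control satisfies the bound with a modified $c_1$.

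\emph{Part b).} I would take $\varphi$ to be the $y_0$ from Theorem~\ref{thm-LN} b). Let $u$ control $\eps\varphi$ nonlinearly, with $\|u\|\le\eps_1$. Split $y=y_1+z$, where $y_1$ solves the linear system with the same $u$ and data $\eps\varphi$, and $z$ solves the linear system with source $-yy_x$ and zero data and control. The nonlinear estimate gives
\[
\|z(T)\|\le C\|y\|_{X_T}^2\le C(\eps+\|u\|)^2 .
\]
Then $y_1(T)=-z(T)$ is small. The linear lower bound argument of Theorem~\ref{thm-LN} b) is robust to a small final-state error: it comes from testing against an adjoint solution $\psi$ (an eigenfunction-based observation), i.e., from
\[
\langle y_1(T),\psi_T\rangle-\eps\langle\varphi,\psi(0)\rangle=\int u\,\psi_x(\cdot,L).
\]
This yields
\[
\eps\|\varphi\|\lesssim e^{-c_2/T^{1/2}}\|u\|+C(\eps+\|u\|)^2 .
\]
If $\|u\|\ge\eps$ then, since $\|u\|\le\eps_1$ with $\eps_1(T)\ll e^{-c_2/T^{1/2}}$, the quadratic term is absorbed and the claim follows. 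If $\|u\|<\eps$, the inequality gives a contradiction for $\eps<\eps_1$ small. Either way, $\|u\|\ge C_2e^{c_2/T^{1/2}}\eps\|\varphi\|$ up to adjusting constants.

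\emph{Main obstacle.} The hardest step is part b). It requires the lower bound in Theorem~\ref{thm-LN} b) to come in the quantitative "duality with an error" form above, not merely as a statement about exact null controls. I would try first to extract that form from the proof of Theorem~\ref{thm-LN} b), which presumably uses a specific adjoint test function localized in frequency.
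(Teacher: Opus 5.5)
Your part a) is correct but organized differently from the paper. You run Rosier's fixed point on trajectories in $X_T$, with control $\Gamma_T y_0+\widetilde\Gamma_T(-w(T))$. The paper instead contracts on the target $\widetilde y_T\in L^2(0,L)$ for reachability from $0$, using only the HUM map $\mathcal L$, and then reduces null controllability to that case. That reduction is left implicit in the paper; it rests on the reversal $(t,x)\mapsto(T-t,L-x)$, which preserves the nonlinear equation, plus the trace bound of Lemma~\ref{lem-KdV2}. Your version avoids the reduction at the price of using both linear maps. Both approaches need $\|\widetilde\Gamma_T\|\le Ce^{c/T^{1/2}}$. The clean justification is the adjoint energy identity $\|\Phi(T,\cdot)\|_{L^2}^2=\|\Phi(0,\cdot)\|_{L^2}^2+\|\Phi_x(\cdot,L)\|_{L^2(0,T)}^2$, combined with the null observability $\|\Phi(0,\cdot)\|_{L^2}\le C_1e^{c_1/T^{1/2}}\|\Phi_x(\cdot,L)\|_{L^2(0,T)}$ that is dual to Theorem~\ref{thm-LN} a). Your alternative (free evolution on $[0,T/2]$, then control) does not yield reachability from $0$ and should be dropped.

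The genuine gap is in part b). Everything rests on the perturbed inequality $\varepsilon\|\varphi\|\le Ce^{-c_2/T^{1/2}}\|u\|+C(\varepsilon+\|u\|)^2$, which you do not prove. The source you propose for it is not available.
\begin{itemize}
\item Theorem~\ref{thm-LN} b) comes from Proposition~\ref{pro1} b), a complex-analytic lower bound that uses all moment conditions of the modified system at once, followed by the transfer $v=u-y_x(\cdot,0)$. There is no explicit frequency-localized adjoint test function to recycle.
\item Testing against one adjoint solution $\psi$ works only if you also bound $\|\psi(T,\cdot)\|$, and that bound is exactly where the constant in front of the error term comes from.
\end{itemize}
The missing idea, which is what the paper does, is to absorb the nonlinear error with the reachability map you already built in part a). With $y=y_1+z$ as in your splitting, let $u_2=\widetilde\Gamma_T(z(T))$. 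Then the linear solution starting from $\varepsilon\varphi$ with control $u+u_2$ equals $y_1(T)+z(T)=y(T)=0$ at time $T$. Theorem~\ref{thm-LN} b) now applies as a black box and gives $\|u\|+\|u_2\|\ge C_2e^{c_2/T^{1/2}}\varepsilon\|\varphi\|$. At the same time, $\|u_2\|\le Ce^{c_1/T^{1/2}}\|z(T)\|\le Ce^{c_1/T^{1/2}}(\varepsilon+\|u\|)^2$. This is your inequality, with a $T$-dependent constant on the quadratic term. That dependence is harmless because $\varepsilon_1=\varepsilon_1(T)$, and your case analysis then finishes the proof. The same estimate also follows in dual form: take $\psi$ nearly extremal for $\sup|\langle\varphi,\psi(0,\cdot)\rangle|/\|\psi_x(\cdot,L)\|_{L^2(0,T)}$, and bound $\|\psi(T,\cdot)\|$ by the energy identity above.
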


We next discuss briefly the analysis of these results. 
One standard way to obtain estimates on the cost of fast control of \eqref{sys-KdV} is to use the moment method
introduced by Fattorini and Russell \cite{FR71}. The idea applied to \eqref{sys-KdV} can be described as follows. Given $\Phi_T \in L^2(0, L)$, let $\Phi \in X_T$ be the unique solution of the backward linearized KdV system 
 \be \label{sys-KdV-Phi}
\left\{ \begin{array}{cl}
\Phi_t + \Phi_x + \Phi_{xxx} = 0 \mbox{ in } (0, T) \times (0, L), \\[6pt]
\Phi(\cdot, 0) = \Phi(\cdot, L) = \Phi_x (\cdot, 0) = 0 \mbox{ in } (0, T),   \\[6pt]
\Phi(T, \cdot) = \Phi_T. 
\end{array} \right. 
\ee
Multiplying the equation of $y$ by $\Phi$ and integrating by parts yield 
\be
\int_0^L \Big( \Phi(T, x) y(T, x) - \Phi(0, x) y (0, x) \Big) \, dx = \int_0^T \Phi_x(t, L) u (t) \, dt. 
\ee
One then derives that $u \in L^2(0, T)$ is a control which steers $y_0$ at time $0$ to $0$ at time $T$ if and only if 
\be \label{KdV-identity}
\int_0^T \Phi_x(t, L) u (t) \, dt =  - \int_0^L \Phi(0, x) y_0 (x) \, dx, 
\ee
for all $\Phi_T$ belonging to a subset dense of $L^2(0, L)$. This is the starting point of the moment method.

Set 
\be
\cD(\cA) = \Big\{ \varphi \in H^3(0, L); \varphi (0) = \varphi(L) = \varphi'(0) = 0  \Big\}, 
\ee
and 
\be
\cA \varphi = - \varphi''' - \varphi \mbox{ for } \varphi \in \cD(\cA). 
\ee
Let $\lambda$ be an eigenvalue of $\cA$ and let $\varphi_\lambda$ be a corresponding eigenfunction, i.e., $\varphi_\lambda \in \cD(\cA)$ and $- \varphi_\lambda''' - \varphi_\lambda = \lambda \varphi_\lambda$. Set 
$$
\Phi(t, x) = e^{\lambda t} \varphi_\lambda (x) \mbox{ in } (0, T) \times (0, L). 
$$
One can check that $\Phi$ is a solution of \eqref{sys-KdV-Phi} with $\Phi_T = e^{\lambda T} \varphi_\lambda (x)$. 
From \eqref{KdV-identity}, one has 
\be \label{KdV-identity}
\varphi_{\lambda}' (L) \int_0^T   e^{\lambda t}  u (t) \, dt =  - \int_0^L \varphi_\lambda(x) y_0 \, dx.  
\ee
If one knew that the set of eigenfunctions of $\cA$ form a dense set of $L^2(0, L)$, this happens if $\cA$ would be self-adjoint or skew adjoint with compact resolvent, then $u$ would be a control if and only if \eqref{KdV-identity} held for all eigenfunctions of $\cA$. One could then construct a function $u$ such that this property holds to obtain an upper bound on the cost, and use the complex analysis dealing with the zeros of holomorphic functions as suggested by Coron and Guerrero in \cite{CG05} to obtain a lower bound. 

Unfortunately, the spectral information  $\big(\cA, \cD(\cA) \big)$ is not in a ready state to apply the moment method. Since $\cA$ is neither self-adjoint nor skew-adjoint, little is known about the full range of its eigenvalues and the corresponding eigenfunctions. It is known that the spectrum of $\cA$
is discrete, see, e.g.,  \cite[Lemma 2.1]{CKN-24}. Nevertheless, it is not known whether 
the set of eigenfunctions form a dense subset of $L^2(0, L)$. For these reasons, the cost of fast control for \eqref{sys-KdV} has not been derived and remains an open problem up to date.  

The goal of this paper is to answer this problem. Our idea, which is simple but can be easily applied to other settings, is as follows.
We connect the control system \eqref{sys-KdV} with another related control KdV system, for which the moment method can be applied, and then derive the cost information from the latter. More precisely, we consider the following control system 
\be \label{sys-KdV-P}
\left\{ \begin{array}{cl}
y_t + y_x + y_{xxx} = 0 \mbox{ in } (0, T) \times (0, L), \\[6pt]
y(\cdot, 0) = y(\cdot, L) = 0 \mbox{ in } (0, T), \\[6pt]
y_x(\cdot, L) - y_x(\cdot, 0) = v(t) \mbox{ in } (0, T), \\[6pt]
y(0,  \cdot) = y_0  \mbox{ in } (0, L).
\end{array} \right. 
\ee
The boundary control $y_x(t, L) = u(t)$ is considered in \eqref{sys-KdV} whileas the boundary control $y_x(\cdot, L) - y_x(\cdot, 0) = v(t)$ is used in \eqref{sys-KdV-P}.  Concerning \eqref{sys-KdV-P}, set \footnote{The index $m$ stands for the modification.}
\be
\cD(\cA_m) = \Big\{w \in H^3(0, L); w(0) = w(L) = 0, \; w'(0) = w'(L) \Big\}, 
\ee
and 
\be
\cA_m w = - w''' - w' \mbox{ for } w \in \cD(\cA_m).  
\ee
Then $\cA_m$ is a skew-adjoint operator with compact resolvent. The moment method can be applied to \eqref{sys-KdV-P}. The idea is then to use the information on the cost of \eqref{sys-KdV-P} and estimates on solutions of \eqref{sys-KdV-P} (and also \eqref{sys-KdV}) to derive the cost for the control of \eqref{sys-KdV} by noting that  
$$
u(t) = v(t) + y_x (t, 0) \mbox{ in } (0, T)
$$
is a control of \eqref{sys-KdV} which steers the initial state $y_0$ at time $0$ to $0$ at time $T$ if and only if $v$ is a control of \eqref{sys-KdV} which steers the initial state $y_0$ at time $0$ to $0$ at time $T$. 

It is noted in \cite{Ng-KdV-S,Ng-S-Schrodinger} inspired from \cite{Ng-Riccati,CoronNg17} that if the cost of fast controls is well-understood then one can do stabilization in finite time for several control systems modeled by a strongly continuous group. 
Materials on control systems in the setting of 
strongly continuous semi-groups can be found in \cite{CZ95,BDDM07,Coron07,TW09}. Another way to reach the stabilization in finite time via backstepping approach is to derive suitable estimates for the kernels involved in the method as suggested by Coron and Nguyen \cite{CoronNg17}. This has been also applied to KdV systems, see \cite{Xiang18,Xiang19}.

The paper is organized as follows. In \Cref{sect-Pre}, we recall several known results which are used in the proof of \Cref{thm-LN,thm-NL}. In particular, the cost bounds of fast controls for the system \eqref{sys-KdV-P} is given.  A proof of this result in the spirit of \cite{Ng-cost-fractional-SH} is given in the appendix. The proofs of \Cref{thm-LN,thm-NL} are then given in \Cref{sect-thm-LN,sect-thm-NL}, respectively. 

\section{Preliminaries} \label{sect-Pre}

In this section, we collect some known results which are used in the proofs of \Cref{thm-LN,thm-NL}. Concerning the spectrum of $\cA_m$, it is known that, see, e.g., \cite{CC09-DCDS}, the set of eigenvalues of $\cA_m$ can be indexed by $(i \lambda_k)_{k \in \mZ \setminus \{0\}}$. Denote $(\varphi_k)_{k \in \mZ \setminus \{0\}}$ the corresponding eigenfunctions which form a basis of $L^2(0, L)$. The following information on $(\lambda_k)_{k \in \mZ \setminus \{0\}}$ and ($\varphi_k)_{k \in \mZ \setminus \{0\}}$ are given in \cite[Proposition 1 and page 660]{CC09-DCDS} (see also \cite[Section 2]{CL14}). 


\begin{lemma} \label{lem1} We have 
\be
\lambda_k = \frac{8 \pi^3 k^3}{L^3} + O(k^2) \mbox{ as } k \to + \infty, 
\ee
and, for large positive $k$, 
\begin{multline}
\varphi_k(x) 
= \alpha_k \Big[e^{- i a_k x } \Big( \cosh \big( (3 a_k^2 - 1)^{1/2} x\big)\Big)  \\[6pt]  + \frac{e^{3 i a_k L} - \cosh \big( (3 a_k^2 - 1)^{1/2} L\big) }{\sinh \big( (3 a_k^2 - 1)^{1/2} L\big)} \sinh \big( (3 a_k^2 - 1)^{1/2} x\big) \Big) - e^{2 i a_k x} \Big] \mbox{ in } [0, L],
\end{multline}
with $a_k = \frac{5 \pi }{6 L} + \frac{k \pi }{L}  + O (1/k)$ and $\lim_{k \to + \infty} \alpha_k = \frac{1}{\sqrt{L}}$. Moreover, for $k \ge 1$,  
\be
\lambda_{-k} = - \lambda_k  \quad \mbox{ and } \quad  
\varphi_{-k} = \bar \varphi_k \mbox{ in } [0, L].  
\ee
 \end{lemma}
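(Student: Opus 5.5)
Lemma~\ref{lem1} is stated in the paper as a collection of known facts from \cite[Proposition 1 and page 660]{CC09-DCDS}. A proof would therefore just reproduce that spectral analysis. I would do it as follows.

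\emph{Step 1: the characteristic equation.} Look for eigenpairs $\cA_m \varphi = i\lambda \varphi$ with $\lambda$ real; this is justified because $\cA_m$ is skew-adjoint with compact resolvent. The eigenvalue equation is $\varphi''' + \varphi' + i\lambda\varphi = 0$. Its solutions are combinations of $e^{\mu_j x}$, where the $\mu_j$ are the three roots of $\mu^3+\mu+i\lambda=0$. For large $\lambda>0$, set $\mu_1 = 2ia$ with $a>0$ real, so that $-8ia^3+2ia+i\lambda=0$, i.e.\ $\lambda = 8a^3-2a$. Factoring out $\mu_1$, the other two roots satisfy $\mu^2+2ia\mu+(1-4a^2)=0$, hence $\mu = -ia \pm (3a^2-1)^{1/2}$.

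\emph{Step 2: the eigenfunction.} Write $\varphi = e^{-iax}\big(A\cosh(bx)+B\sinh(bx)\big) - e^{2iax}$ with $b=(3a^2-1)^{1/2}$. The condition $\varphi(0)=0$ forces $A=1$, and $\varphi(L)=0$ gives $B=\big(e^{3iaL}-\cosh bL\big)/\sinh bL$, which is exactly the stated formula up to the normalization $\alpha_k$. The remaining condition $\varphi'(0)=\varphi'(L)$ is a single real-analytic equation in $a$.

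\emph{Step 3: asymptotics of the roots (the main obstacle).} Since $\cosh bL,\sinh bL\sim e^{bL}/2$, we get $B = -1 + 2e^{3iaL}e^{-bL}+O(e^{-2bL})$. Hence near $x=L$ the hyperbolic part behaves like $e^{-iax}e^{-b(L-x)}e^{3iaL}$ up to exponentially small errors, and near $x=0$ it behaves like $e^{-iax}e^{-bx}$. Computing $\varphi'(0)$ and $\varphi'(L)$ at leading order $O(a)$, with $b\approx\sqrt3 a$, the condition $\varphi'(0)=\varphi'(L)$ reduces to an equation of the form
\[
(-ia-b-2ia) = e^{2iaL}\big(-ia+b\big)e^{3iaL-iaL}\cdot(\text{const}) + O(1).
\]
This yields $e^{3iaL}$ equal to a fixed unimodular constant, namely a phase $e^{i\theta}$ with $\theta$ coming from $\arg\big((-3i-\sqrt3)/(\sqrt3-i)\big)$, up to $O(1/a)$. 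The care needed here is the bookkeeping of these phases, so as to obtain precisely the offset $5\pi/(6L)$. Applying Rouch\'e's theorem (or the implicit function theorem) near each approximate root gives exactly one root $a_k = \frac{5\pi}{6L}+\frac{k\pi}{L}+O(1/k)$. A counting argument on large contours ensures that no eigenvalues are missed, and the indexing is fixed accordingly.

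\emph{Step 4: conclusions.} Then $\lambda_k = 8a_k^3 - 2a_k = 8\pi^3k^3/L^3 + O(k^2)$. For the normalization, note that $\|\varphi\|_{L^2}^2 = L + O(1/a)$: the term $e^{2iax}$ contributes $L$, the boundary layers contribute $O(1/b)$, and the cross terms are small. Hence $\alpha_k\to 1/\sqrt L$. Finally, taking complex conjugates maps solutions of $\varphi'''+\varphi'+i\lambda\varphi=0$ to solutions with $-\lambda$, and the boundary conditions are real. This gives $\lambda_{-k}=-\lambda_k$ and $\varphi_{-k}=\bar\varphi_k$.
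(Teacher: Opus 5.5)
Your route is the standard one from the reference the paper quotes; the paper itself gives no proof. That route is: roots $2ia$ and $-ia\pm b$ with $b=(3a^2-1)^{1/2}$, the ansatz normalized by $\varphi(0)=\varphi(L)=0$, then asymptotics of the remaining condition $\varphi'(0)=\varphi'(L)$. Steps 1, 2 and 4 are fine. But Step 3, where the actual content of the lemma comes from, is computed incorrectly, in two places.

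\begin{itemize}
\item The profile $e^{-iax}e^{3iaL}e^{-b(L-x)}$ has derivative $(b-ia)e^{-iaL}e^{3iaL}=(b-ia)e^{2iaL}$ at $x=L$, with no further factor $e^{2iaL}$.
\item You dropped the contribution $-2ia\,e^{2iaL}$ of the term $-e^{2iax}$ at $x=L$, while keeping the $-2ia$ at $x=0$.
\end{itemize}

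The correct leading-order relation is
\[ -(b+3ia)=(b-3ia)\,e^{2iaL}+O\big(ae^{-bL}\big), \]
which is a condition on $e^{2iaL}$, not on $e^{3iaL}$. Since $(\sqrt3+3i)/(\sqrt3-3i)=e^{2i\pi/3}$ and $b/a=\sqrt3+O(a^{-2})$, this gives $e^{2iaL}=e^{5i\pi/3}+O(a^{-2})$. Equivalently, $aL\in\frac{5\pi}{6}+\pi\mathbb{Z}$ up to $O(a^{-2})$.

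Your relation cannot give the statement. The ratio $(-\sqrt3-3i)/(\sqrt3-i)$ has modulus $\sqrt3$, so it is not a phase. Moreover, any relation fixing $e^{3iaL}$ produces roots spaced by $2\pi/(3L)$ instead of $\pi/L$. That would give $\lambda_k\sim 64\pi^3k^3/(27L^3)$ rather than $8\pi^3k^3/L^3$.

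The step can be done exactly. With $B$ from Step 2,
\[ \varphi'(0)-\varphi'(L)=2e^{iaL}\Big[-3a\sin(aL)+\frac{b\,\big(\cos(2aL)-\cosh(bL)\cos(aL)\big)}{\sinh(bL)}\Big]. \]
So for real $a$ the eigenvalue condition is the real equation $3a\sin(aL)\sinh(bL)+b\cos(aL)\cosh(bL)=b\cos(2aL)$. This is what justifies your claim that it is a single real equation.

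Dividing by $a\sinh(bL)$ gives $2\sqrt3\,\sin(aL+\pi/6)=O(a^{-2})$, and the $a$-derivative of the error is also $O(a^{-2})$. The intermediate value theorem and monotonicity near the zeros then give exactly one root in each interval of length $\pi/L$ for large $a$, located at $a=\frac{5\pi}{6L}+\frac{k\pi}{L}+O(k^{-2})$. No Rouch\'e argument is needed for the large eigenvalues.

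What remains merely asserted in your sketch is the global count of the finitely many small eigenvalues. That count is what pins down the index $k$ exactly. An index shift is invisible in $\lambda_k=8\pi^3k^3/L^3+O(k^2)$, but not in $a_k=\frac{5\pi}{6L}+\frac{k\pi}{L}+O(1/k)$.
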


It is useful to note that in the case $L \not \in \cN$, one has 
\be \label{cond-N-1}
\varphi_k'(L) \neq 0 \mbox{ for all } k \in \mZ \setminus \{0 \}.
\ee
As a consequence of \Cref{lem1}, one then has 
\be \label{cond-N-2}
|\varphi_k'(L)| \ge C |a_k| \ge C |k| \mbox{ for } k \in \mZ \setminus \{0 \}. 
\ee
Using this and appllying the moment method, one can prove the following result. 
\begin{proposition}\label{pro1} Assume that $L \not \in \cN$ and $0< T < T_0$. We have
\begin{itemize}
\item[a)] Given $y_0 \in L^2(0, L)$,  there exists $v \in L^2(0, T)$ such that  $y(T, \cdot) = 0$, where $y \in X_T$ is the unique solution of \eqref{sys-KdV-P}. Moreover, 
\be
\| v\|_{L^2(0, T)} \le C_1 e^{\frac{c_1}{T^{1/2}}} \| y_0 \|_{L^2(0, T)}.  
\ee

\item[b)] Let  $y_0 = \varphi_1$ and let $v \in L^2(0, T)$ be such that $y(T, \cdot) = 0$, where $y \in X_T$ is the unique solution of \eqref{sys-KdV-P}.  One has 
\be
\| v \|_{L^2(0, T)} \ge C_2 e^{\frac{c_2}{T^{1/2}}} \| y_0 \|_{L^2(0, T)}. 
\ee
\end{itemize}
Here $c_1, C_1$, $c_2, C_2$ are positive constants depending only on $L$ and $T_0$.  
\end{proposition}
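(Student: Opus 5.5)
The proof is the moment method for the skew-adjoint operator $\cA_m$, whose eigenfunctions $(\varphi_k)$ form a basis of $L^2(0,L)$.

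\emph{Step 1: moment identity.} Take $\Phi(t,x)=e^{i\lambda_k t}\varphi_k(x)$, which solves the adjoint of \eqref{sys-KdV-P}. By the boundary conditions in $\cD(\cA_m)$ we have $\varphi_k'(0)=\varphi_k'(L)$, so the boundary terms combine into $\varphi_k'(L)\,(y_x(t,L)-y_x(t,0))=\varphi_k'(L)\,v(t)$. Multiplying the equation by $\bar\Phi$, or by $\Phi$ with the appropriate sign conventions, and integrating by parts, I will show that $y(T,\cdot)=0$ if and only if
\[
\varphi_k'(L)\int_0^T e^{-i\lambda_k t} v(t)\,dt = -\int_0^L y_0\,\bar\varphi_k\,dx \quad \mbox{ for all } k\in\mZ\setminus\{0\},
\]
up to harmless sign and conjugation conventions. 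The ``if'' direction uses the basis property.

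\emph{Step 2: upper bound via a biorthogonal family.} Because \eqref{cond-N-1} and \eqref{cond-N-2} give $|\varphi_k'(L)|\ge C|k|$, it suffices to build $(\theta_k)\subset L^2(0,T)$ biorthogonal to $(e^{-i\lambda_k t})$ with $\|\theta_k\|_{L^2(0,T)}\le C e^{c/T^{1/2}} e^{\eps|\lambda_k|^{1/3}}$ for small $\eps$. Then $v=-\sum_k \frac{(y_0,\varphi_k)}{\varphi_k'(L)}\theta_k$ converges with the bound in a).

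To build $\theta_k$, I will follow the construction of \cite{Ng-cost-fractional-SH} (Tenenbaum--Tucsnak style). Let $G_k$ be a Weierstrass product $\prod_{j\neq k}(1-z/\lambda_j)$, normalized and adapted to $\lambda_j\sim 8\pi^3 j^3/L^3$. Multiply it by a multiplier of exponential type $\sim T$ that decays like $e^{-c|x|^{1/2}\,\cdot}$ on the real axis, chosen to kill the growth $e^{C|z|^{1/3}}$ of $G_k$. The choice of the multiplier is optimized over the frequency scale $\sim T^{-3/2}$, and this optimization produces the factor $e^{c/T^{1/2}}$. Paley--Wiener then yields $\theta_k$ supported in $[0,T]$.

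The gap condition $\lambda_{k+1}-\lambda_k\ge c k^2$ from \Cref{lem1}, together with the separation of distinct eigenvalues, ensures $|G_k(\lambda_k)|$ is not too small. This bookkeeping is the main technical obstacle: the asymptotics hold only for large $k$ with $O(k^2)$ errors, and finitely many small eigenvalues must be handled separately. For the upper bound I will use only that $\sum_k |\lambda_k|^{-1/3}$-type counting functions satisfy $N(r)\sim c r^{1/3}$.

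\emph{Step 3: lower bound.} Take $y_0=\varphi_1$. The identities become $\int_0^T e^{-i\lambda_k t}v\,dt=0$ for $k\neq1$ and $=-1/\varphi_1'(L)$ for $k=1$. Set $F(z)=\int_0^T e^{-izt}v(t)\,dt$, an entire function of exponential type $\le T$ with $|F(z)|\le \sqrt T\,e^{T|\mathrm{Im} z|}\|v\|_{L^2}$. It vanishes at all $\lambda_k$ with $k\neq1$, and $|F(\lambda_1)|=1/|\varphi_1'(L)|$.

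Following Coron--Guerrero \cite{CG05}, I will apply Jensen's formula (or a Carleman/Phragmén--Lindelöf argument in a disc centered near $\lambda_1$ of radius $R$). The disc contains about $cR^{1/3}$ zeros, which gives
\[
\log|F(\lambda_1)| \le \log\|v\| + C T R - c R^{1/3}\log(\cdot).
\]
Choosing $R\sim T^{-3/2}$ makes $TR\sim T^{-1/2}$ and $R^{1/3}\sim T^{-1/2}$, and this forces $\|v\|\ge C e^{c/T^{1/2}}$. The constants must be tuned so that the zero-counting term dominates $CTR$. This is done by taking $R=\delta T^{-3/2}$ with $\delta$ large and using the logarithmic gain from the spacing of zeros relative to the radius. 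This is the delicate step, and I would first try the integrated Jensen bound $\int_0^R n(r)/r\,dr\approx 3cR^{1/3}$ against $TR$ with $R=\delta T^{-3/2}$, choosing $\delta$ so that $3c\delta^{1/3}>2\delta$.
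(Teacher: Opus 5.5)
The gap is in Step 2: the sufficient condition you state is not sufficient. Since $\cA_m$ is skew-adjoint, the moment data carry no dissipative factor like the $e^{-\lambda_k T}$ that makes the analogous heat-equation construction work. The coefficients $\langle y_0,\varphi_k\rangle/\varphi_k'(L)$ are only $|k|^{-1}$ times an $\ell^2$ sequence. Because $|\lambda_k|^{1/3}\sim 2\pi|k|/L$, a bound $\|\theta_k\|_{L^2(0,T)}\le Ce^{c/T^{1/2}}e^{\eps|\lambda_k|^{1/3}}$ allows growth like $e^{\eps'|k|}$. Such a bound gives neither convergence of $\sum_k \frac{\langle y_0,\varphi_k\rangle}{\varphi_k'(L)}\theta_k$ for general $y_0\in L^2(0,L)$, nor a constant independent of $y_0$: for $y_0=\varphi_k$ it only yields $\|v\|\le Ce^{c/T^{1/2}}e^{\eps'|k|}/|k|$. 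What you need is $\|\theta_k\|\le Ce^{c/T^{1/2}}$ uniformly in $k$; growth with $\sum_k\|\theta_k\|^2/k^2<\infty$ would also do. So the relevant quantity is not a lower bound on $|G_k(\lambda_k)|$ but the ratio $G_k(z)/G_k(\lambda_k)$. (For $\lambda_j\approx aj^3$, the number $|G_k(\lambda_k)|$ is in fact exponentially large in $|\lambda_k|^{1/3}$, like $|G_k|$ itself near $\lambda_k$.)

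The missing idea is the translation-invariant estimate of \Cref{lem-Phi}. One has $G_k(z)/G_k(\lambda_k)=\Phi_k(z-\lambda_k)$, and $|\Phi_k(\zeta)|\le Ce^{c|\zeta|^{1/3}}$ with $C,c$ independent of $k$. Correspondingly, the multiplier must be translated to the same point: this is the paper's $H(z+\lambda_n)$ with $H(0)=1$, in its sign convention. This gives $|g_n(x)|\le Ce^{c/T^{1/2}}e^{-|x+\lambda_n|^{1/3}}$ on $\mR$, hence $\|g_n\|_{L^2(\mR)}\le Ce^{c/T^{1/2}}$ independently of $n$. Then $|c_n|\le C/|n|$ and Cauchy--Schwarz yield both convergence and the bound in a). Your multiplier profile (decay $e^{-c|x|^{1/2}}$, with the parameter tuned at the frequency scale $T^{-3/2}$) is a legitimate alternative to the paper's choice $\mu=1/2$ and still produces $e^{c/T^{1/2}}$, but only once it is centered at $\lambda_k$. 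Steps 1 and 3 are fine. Step 3 is the Coron--Guerrero argument that the paper imports from \cite[Lemma 3.2]{Ng-cost-fractional-SH}, with one correction: $R=\delta T^{-3/2}$ must be taken with $\delta$ small, not large. Your own condition $3c\delta^{1/3}>2\delta$ means $\delta^{2/3}<3c/2$, and no extra logarithmic gain is needed.
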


Assertion a) and a slightly different version of Assertion b) of \Cref{pro1} 
were proved by Lissy \cite[Theorem 3.2]{Lissy14}.  In the appendix, we give a proof of the results given here in the spirit of our previous work \cite{Ng-cost-fractional-SH} (see also \cite{CG05,Lissy15,Lissy17}). 

\medskip 
Concerning the estimates for solutions of \eqref{sys-KdV-P}, we recall the following result, see, e.g., 
\cite[Proposition 3.1]{Ng-KdV-S}.

\begin{lemma} \label{lem-KdV1} Let $L > 0$, $0 < T < T_0$, and let $y_0 \in L^2(0, L)$, $f \in L^1((0, T); L^2(0, L))$, and $h \in L^2(0, T)$. There exists a unique weak solution $z \in X_T$ of the system 
\be \label{lem-KdV1-sys}
\left\{\begin{array}{c}
y_t + y_x + y_{xxx}  = f \quad \mbox{ in } (0, T) \times (0, L), \\[6pt]
y(\cdot, 0) = y (\cdot, L) = 0  \mbox{ in } (0, T), \\[6pt]
 y_x(\cdot, L) - y_x(\cdot, 0) = h \mbox{ in } (0, T), \\[6pt]
y(0, \cdot) = y_0 (\cdot)  \mbox{ in } (0, L).   
\end{array} 
\right. 
\ee
Moreover, 
\be \label{lem-KdV1-est}
\| y\|_{X_T} + \|y_x(\cdot, 0) \|_{L^2(0, T)} \le C \Big(\| y_0\|_{L^2(0, L)} +  \| f\|_{L^1((0, T); L^2(0, T))} + \| h \|_{L^2(0, T)} \Big), 
\ee
for some positive constant $C$ independent of $f$, $h$, $y_0$, and $T$. 
\end{lemma}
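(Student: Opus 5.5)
We have to prove Lemma \ref{lem-KdV1}: well-posedness of \eqref{lem-KdV1-sys} together with the estimate \eqref{lem-KdV1-est}, including the hidden regularity of $y_x(\cdot,0)$ and the uniformity of $C$ in $T<T_0$. The plan is to treat the homogeneous-boundary problem by semigroup theory and energy identities, then lift the boundary datum $h$, and conclude by density.

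\textbf{Homogeneous boundary data.} First take $h=0$. Since the first-order part of $\cA_m$ is $-w'$ (which matches the equation $y_t+y_x+y_{xxx}=0$), one has $y_t=\cA_m y+f$ with $\cA_m$ skew-adjoint. Indeed, for $w\in\cD(\cA_m)$, integration by parts gives
$$
\int_0^L(w'''+w')w\,dx=\Big[w''w-\tfrac12 w'^2+\tfrac12 w^2\Big]_0^L=0,
$$
because $w(0)=w(L)=0$ and $w'(0)=w'(L)$. By Stone's theorem $\cA_m$ generates a unitary group on $L^2(0,L)$. Duhamel's formula then gives a unique mild solution with
$$
\|y\|_{C([0,T];L^2)}\le\|y_0\|_{L^2}+\|f\|_{L^1(L^2)}.
$$

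\textbf{Hidden regularity and the $H^1$ bound.} Both come from the multiplier $xy$, computed first for smooth data. Multiplying the equation by $xy$ and integrating by parts, the boundary term at $x=L$ vanishes from $y(\cdot,L)=0$, and one obtains
$$
\frac{d}{dt}\int_0^L \frac{x y^2}{2}\,dx+\frac32\int_0^L y_x^2\,dx-\frac12\int_0^L y^2\,dx+\frac L2\, y_x(t,L)^2=\int_0^L x f y\,dx .
$$
Because $y_x(t,0)=y_x(t,L)$ when $h=0$, integrating in time controls both $\|y_x\|_{L^2(L^2)}$ and $\|y_x(\cdot,0)\|_{L^2(0,T)}$ by the right-hand side of \eqref{lem-KdV1-est}. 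The constants depend only on $L$ and $T_0$, since the time integrals are taken over $(0,T)\subset(0,T_0)$.

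\textbf{Nonzero $h$.} Write $y=z+\psi(x)h(t)$ with a smooth $\psi$ satisfying $\psi(0)=\psi(L)=0$ and $\psi'(L)-\psi'(0)=1$. This lifting requires $h'$, so I would avoid it and instead argue directly for smooth $h$, then pass to the limit. Repeat the two identities without the condition $y_x(t,0)=y_x(t,L)$:
\begin{itemize}
\item Multiplier $y$: this gives $\frac{d}{dt}\|y\|^2=y_x(t,0)^2-y_x(t,L)^2+2\int fy$. Setting $a=y_x(t,0)$, the boundary term equals $a^2-(a+h)^2=-2ah-h^2$.
\item Multiplier $xy$: this gives $L\,y_x(t,L)^2$ with a good sign.
\end{itemize}
Combining them, $|2ah|\le\delta a^2+\delta^{-1}h^2$, and $a^2\le 2(a+h)^2+2h^2$, so $a^2$ is controlled by $y_x(t,L)^2+h^2$. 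The multiplier-$xy$ identity bounds $\int_0^T y_x(t,L)^2\,dt$ by $\|y_0\|^2+\|f\|_{L^1L^2}\|y\|_{C(L^2)}+T_0\|y\|^2_{C(L^2)}$. Feeding this into the multiplier-$y$ identity and choosing $\delta$ small closes a Gronwall-type estimate for $\|y\|_{C(L^2)}$, and the $H^1$ and trace bounds follow.

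\textbf{Density and uniqueness.} Existence of smooth solutions for smooth compatible data, for which the computations above are justified, comes from the semigroup result plus the lifting. Density of such data then extends the estimate to general data and defines the weak solution in $X_T$. Uniqueness follows from the same energy estimate applied to the difference of two solutions, or by duality.

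The main obstacle is closing the a priori estimate for $h\ne0$. The trace $y_x(t,0)$ enters the $L^2$ identity with an indefinite sign, so it must be absorbed using the Kato-type smoothing obtained from the multiplier $xy$ at $x=L$, while keeping every constant uniform in $T<T_0$.
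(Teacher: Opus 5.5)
\textbf{There is a genuine gap, located in the multiplier identities: both of your boundary terms have the wrong sign.} Since $y(t,0)=y(t,L)=0$, one has $\int_0^L y\,y_{xxx}\,dx=-\tfrac12\big(y_x(t,L)^2-y_x(t,0)^2\big)$. The correct identities are therefore
\[
\frac{d}{dt}\|y(t)\|_{L^2}^2=y_x(t,L)^2-y_x(t,0)^2+2\int_0^L fy\,dx
\]
(in Rosier's setting $y_x(\cdot,L)=0$ this is the familiar dissipation $-y_x(t,0)^2$), and
\[
\frac{d}{dt}\int_0^L\frac{xy^2}{2}\,dx+\frac32\int_0^L y_x^2\,dx=\frac12\int_0^L y^2\,dx+\frac L2\,y_x(t,L)^2+\int_0^L xfy\,dx .
\]
So $y_x(t,L)^2$ enters the $xy$ identity as a source term, not with a good sign. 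This is Rosier's Kato-smoothing identity. It closes for \Cref{lem-KdV2} only because there $y_x(\cdot,L)=h$ is prescribed. Here $y_x(\cdot,L)=y_x(\cdot,0)+h$ is unknown, and your scheme becomes circular:
\begin{itemize}
\item For $h=0$, the $L^2$ identity is an exact conservation law and says nothing about the trace. The $xy$ identity only bounds $\|y_x\|_{L^2((0,T);L^2)}$ in terms of $\|y_x(\cdot,L)\|_{L^2(0,T)}$.
\item For $h\neq0$, the term $2y_x(t,0)h+h^2$ stays indefinite, with nothing left to absorb it.
\end{itemize}
The only part that survives is the unitary-group bound $\|y\|_{C([0,T];L^2)}\le\|y_0\|_{L^2}+\|f\|_{L^1((0,T);L^2)}$ for $h=0$.

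\textbf{No other multiplier can repair this, because the estimate is false as stated.} The paper does not prove the lemma; it only quotes it. Take $f=0$, $h=0$ and $y_0=\varphi_k$, an $L^2$-normalized eigenfunction of the skew-adjoint operator $\cA_m$ (take real parts if you want real data). Then $y(t,x)=e^{i\lambda_k t}\varphi_k(x)$ is a smooth solution.
\begin{itemize}
\item Trace: since $\varphi_k'(0)=\varphi_k'(L)$ on $\cD(\cA_m)$, \Cref{lem1} (see \eqref{cond-N-2}) gives $\|y_x(\cdot,0)\|_{L^2(0,T)}=\sqrt T\,|\varphi_k'(L)|\ge C\sqrt T\,|k|\to\infty$, while $\|y_0\|_{L^2}=1$.
\item Smoothing: $\|y\|_{L^2((0,T);H^1)}=\sqrt T\,\|\varphi_k\|_{H^1}$ is unbounded in $k$. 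Otherwise the infinite orthonormal family $(\varphi_k)$ would be precompact in $L^2$ by Rellich's theorem.
\end{itemize}
As a check on the sign error, apply your version of the $xy$ identity (with $|y|^2$ in place of $y^2$) to this solution. It reads $\frac32\|\varphi_k'\|^2+\frac L2|\varphi_k'(L)|^2=\frac12$, which is absurd for large $k$. The corrected identity reads $\frac32\|\varphi_k'\|^2=\frac12+\frac L2|\varphi_k'(L)|^2$.

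Even the $C([0,T];L^2)$ part of the estimate fails for $h\in L^2(0,T)$. With $y_0=0$ and $f=0$, the coefficients $c_k=\langle y,\varphi_k\rangle$ satisfy $c_k'=i\lambda_k c_k+\overline{\varphi_k'(L)}\,h$. Choose $h(t)=\chi(t)\cos(\lambda_k(T-t))$ with a fixed $0\le\chi\in C^\infty_c(0,T)$, $\chi\not\equiv0$. Then $|c_k(T)|\ge\kappa\,|\varphi_k'(L)|\,\|h\|_{L^2(0,T)}$ for large $k$, with $\kappa>0$ depending only on $\chi$. So the boundary input $y_x(\cdot,L)-y_x(\cdot,0)$ is not even admissible on $L^2(0,L)$ for this conservative system, and there is no smoothing effect to compensate.
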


Concerning the estimates for solutions of \eqref{sys-KdV}, we recall the following result, see, e.g.,  \cite{CKN-24} and \cite{Ng-KdV25}. 

\begin{lemma} \label{lem-KdV2} Let $L > 0$, $0 < T < T_0$, and let $y_0 \in L^2(0, L)$, $f \in L^1((0, T); L^2(0, L))$, and $h \in L^2(0, T)$. There exists a unique weak solution $z \in X_T$ of the system 
\be \label{lem-KdV1-sys}
\left\{\begin{array}{c}
y_t + y_x + y_{xxx}  = f \quad \mbox{ in } (0, T) \times (0, L), \\[6pt]
y(\cdot, 0) = y (\cdot, L) = 0  \mbox{ in } (0, T)\\[6pt] 
y_x(\cdot, L)  = h \mbox{ in } (0, T), \\[6pt]
y(0, \cdot) = y_0 (\cdot)  \mbox{ in } (0, L).   
\end{array} 
\right. 
\ee
Moreover, 
\be \label{lem-KdV1-est}
\| y\|_{X_T} + \|y_x(\cdot, 0) \|_{L^2(0, T)} \le C \Big(\| y_0\|_{L^2(0, L)} +  \| f\|_{L^1((0, T)); L^2(0, T))} + \| h \|_{L^2(0, T)} \Big), 
\ee
for some positive constant $C$ independent of $f$, $h$, $y_0$, and $T$. 
\end{lemma}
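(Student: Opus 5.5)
The plan is to prove the a priori estimate by multiplier identities, first for smooth data, and then to obtain existence and uniqueness of weak solutions by density and by the semigroup associated with the homogeneous problem. Set
$$
\cD(B)=\{w\in H^3(0,L);\ w(0)=w(L)=w'(L)=0\},\qquad Bw=-w'''-w'.
$$
For $w\in\cD(B)$, integrating by parts gives
$$
\int_0^L Bw\, w\,dx=-\tfrac12 |w'(0)|^2\le 0 .
$$
The adjoint has domain $\{w(0)=w(L)=w'(0)=0\}$ and is also dissipative. Hence $B$ is m-dissipative and generates a contraction semigroup on $L^2(0,L)$. For $y_0\in\cD(B)$, $f\in C^1([0,T];L^2(0,L))$ and $h\in C^2_c(0,T)$, I lift the boundary datum by $\psi(t,x)=h(t)\,\chi(x)$, where $\chi$ is a fixed smooth function with $\chi(0)=\chi(L)=0$ and $\chi'(L)=1$. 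Then $y-\psi$ solves the homogeneous-boundary problem with a smooth source, so $y$ is a classical solution. It therefore suffices to prove the estimate with a constant independent of $T\in(0,T_0)$ for such smooth solutions, and then to pass to the limit.

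\emph{Step 1 (multiplier $y$).} Multiply the equation by $y$ and integrate over $(0,L)$. Using $\int_0^L y\,y_{xxx}\,dx=-\tfrac12\big[y_x^2\big]_0^L$ and $\int_0^L y\,y_x\,dx=0$, I obtain
$$
\tfrac12\frac{d}{dt}\|y(t)\|_{L^2}^2+\tfrac12|y_x(t,0)|^2=\tfrac12|h(t)|^2+\int_0^L f\,y\,dx .
$$
Integrating over $(0,t)$ and taking the supremum in $t$ yields
$$
\|y\|_{C([0,T];L^2)}^2+\|y_x(\cdot,0)\|_{L^2(0,T)}^2\le \|y_0\|^2+\|h\|_{L^2}^2+2\|f\|_{L^1L^2}\,\|y\|_{C([0,T];L^2)} .
$$
Absorbing the last term by Young's inequality gives the $C([0,T];L^2)$ bound and the hidden regularity of the trace $y_x(\cdot,0)$, with a constant independent of $T$.

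\emph{Step 2 (multiplier $xy$).} Multiply the equation by $x\,y$. The integrations by parts give
$$
\int_0^L x\,y\,y_{xxx}\,dx=\tfrac32\int_0^L y_x^2\,dx-\tfrac L2\,h^2
\qquad\text{and}\qquad
\int_0^L x\,y\,y_x\,dx=-\tfrac12\int_0^L y^2\,dx .
$$
Hence
$$
\tfrac12\frac{d}{dt}\int_0^L x\,y^2\,dx+\tfrac32\int_0^L y_x^2\,dx=\tfrac L2\,h^2+\tfrac12\int_0^L y^2\,dx+\int_0^L x\,f\,y\,dx .
$$
Integrating over $(0,T)$ and using Step 1 together with $T<T_0$ bounds $\|y\|_{L^2(0,T;H^1)}$ by $C\big(\|y_0\|+\|f\|_{L^1L^2}+\|h\|_{L^2}\big)$, with $C$ depending only on $L$ and $T_0$. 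This completes the estimate for smooth solutions.

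\emph{Step 3 (weak solutions).} For general $(y_0,f,h)\in L^2(0,L)\times L^1((0,T);L^2(0,L))\times L^2(0,T)$, I approximate by smooth data as above. By linearity the estimate applies to differences, so the approximate solutions are Cauchy in $X_T$ and their traces $y_x(\cdot,0)$ are Cauchy in $L^2(0,T)$. The limit is a weak solution, meaning it satisfies the duality identity obtained by testing against smooth solutions of the adjoint problem with $\Phi(0)=\Phi(L)=\Phi_x(0)=0$, and it inherits the estimate.

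\emph{Step 4 (uniqueness and main obstacle).} For uniqueness, take a weak solution with zero data and test it against adjoint solutions with arbitrary final datum $\Phi_T$; this forces $y(T)=0$ for every $T$, so $y\equiv 0$. I expect the main, though routine, obstacle to be this rigorous weak-solution framework: checking that $B$ is m-dissipative (its adjoint is dissipative too, as noted above), and verifying that the limit object and its trace $y_x(\cdot,0)$ are meaningful and match the duality definition. Once Steps 1 and 2 are in hand, the estimates themselves are straightforward.
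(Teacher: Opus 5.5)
Your proof is correct. The paper gives no proof of this lemma and simply quotes it from earlier work; your route is the standard Rosier-type argument: an m-dissipative generator with $w(0)=w(L)=w'(L)=0$, a lifting of $h$, and the multipliers $y$ and $xy$. The identities $\int_0^L y\,y_{xxx}\,dx=-\tfrac12\big(h^2-y_x(t,0)^2\big)$ and $\int_0^L x\,y\,y_{xxx}\,dx=\tfrac32\int_0^L y_x^2\,dx-\tfrac L2 h^2$ both hold. Step 1 then gives the $C([0,T];L^2)$ bound and the $L^2(0,T)$ bound on $y_x(\cdot,0)$ with a $T$-independent constant, and Step 2 gives the $L^2(0,T;H^1)$ bound with a constant depending only on $L$ and $T_0$. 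The density and duality step is also sound, once $y_x(\cdot,0)$ for a weak solution is defined as the $L^2(0,T)$-limit of the traces of the smooth approximations; that limit is independent of the approximating sequence by the estimate applied to differences.
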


\section{The cost of fast controls of the linearized KdV system - Proof of \Cref{thm-LN}} \label{sect-thm-LN}

 The proof is divided into two steps.

\medskip 
\noindent {\it Step 1:} Proof of Assertion $a)$. By Assertion a) of \Cref{pro1}, there exists a control $v \in L^2(0, T)$ of \eqref{sys-KdV-P} such that 
\be \label{thm-LN-Step1-p1}
y(T, \cdot) = 0 \mbox{ in } (0, L), 
\ee
and 
\be \label{thm-LN-Step1-p2}
\| v\|_{L^2(0, T)} \le C e^{\frac{c}{T^{1/2}}} \| y_0 \|_{L^2(0, T)}, 
\ee
where $y \in X_T$ is the unique solution of \eqref{sys-KdV-P}. 

Here and what follows in this proof, $c$ and $C$ denote positive constants depending only on $L$ and can change from one place to another.  We have, by \Cref{lem-KdV1},  
\be \label{thm-LN-Step1-p3}
\| y_x(\cdot, 0) \|_{L^2(0, T)} \le C \Big( \| y_0 \|_{L^2(0, T)} + \| v\|_{L^2(0, T)}  \Big). 
\ee
Set 
$$
u(t) = v(t) + y_x(t, 0) \mbox{ in } (0, T). 
$$
Then 
\be  \label{thm-LN-Step1-p4}
\left\{ \begin{array}{cl}
y_t + y_x + y_{xxx} = 0 \mbox{ in } (0, T) \times (0, L), \\[6pt]
y(\cdot, 0) = y(\cdot, L) = 0 \mbox{ in } (0, T), \\[6pt]
y_x(\cdot, L)  = u (t) \mbox{ in } (0, T), \\[6pt]
y(0,  \cdot) = y_0 \mbox{ in } (0, L).
\end{array} \right. 
\ee
From \eqref{thm-LN-Step1-p2} and \eqref{thm-LN-Step1-p3}, we obtain  
\be\label{thm-LN-Step1-p5}
\| u \|_{L^2(0, T)} \le C e^{\frac{c}{T^{1/2}}} \| y_0 \|_{L^2(0, T)}. 
\ee
Assertion $a)$ now follows from \eqref{thm-LN-Step1-p1}, \eqref{thm-LN-Step1-p4}, and \eqref{thm-LN-Step1-p5}. 

\medskip 
\noindent {\it Step 2}: Proof of Assertion $b)$ with $\varphi = \varphi_1$. Recall that $\varphi_1$ is the eigenfunction of $\cA_m$. Let $u \in L^2(0, T)$ be such that 
\be
y(T, \cdot) = 0 \mbox{ in } (0, L), 
\ee
where $y \in X_T$ is the unique solution of \eqref{sys-KdV}.  Set 
\be \label{pro1-Step2-p1}
v(t) = u(t) - y_x(t, 0) \mbox{ in } (0, T). 
\ee
Then 
$$
\left\{ \begin{array}{cl}
y_t + y_x + y_{xxx} = 0 \mbox{ in } (0, T) \times (0, L), \\[6pt]
y(\cdot, 0) = y(\cdot, L) = 0 \mbox{ in } (0, T), \\[6pt]
y_x(\cdot, L) - y_x(\cdot, 0)  = v (t) \mbox{ in } (0, T), \\[6pt]
y(0,  \cdot) = y_0 \mbox{ in } (0, L).
\end{array} \right. 
$$
By Assertion b) of \Cref{pro1}, we have 
$$
\| v\|_{L^2(0, T)} \ge C e^{\frac{c}{T^{1/2}}} \| y_0\|_{L^2(0, T)}, 
$$
which yields, by \eqref{pro1-Step2-p1},   
\be \label{pro1-Step2-p2}   
\| u\|_{L^2(0, T)} + \|y_x(\cdot, 0) \|_{L^2(0, T)} \ge C e^{\frac{c}{T^{1/2}}} \| y_0\|_{L^2(0, T)}. 
\ee
Since,  by \Cref{lem-KdV2},    
\be
\| y_x(t, 0)\|_{L^2(0, T)} \le C  \Big( \| y_0 \|_{L^2(0, T)} + \| u \|_{L^2(0, T)}  \Big), 
\ee
we derive from \eqref{pro1-Step2-p2} that for $T_0$ sufficiently small 
\be 
\| u \|_{L^2(0, T)} \ge C e^{\frac{c}{T^{1/2}}} \| y_0\|_{L^2(0, T)},  
\ee
which yields  Assertion $b)$. 
\qed 

\begin{remark} \rm The constants $c_1, C_1$, $c_2$, $C_2$ can be obtained explicitely as a function of $L$ from the proof. 
\end{remark}

\begin{remark} \rm Fix $0 < \ell_1 < \ell_2 < + \infty$ and let $\ell_1 \le L \le \ell_2$. One can choose the constant $c_1$ independent of $L$ but depends on $\ell_1$ and $\ell_2$. Nevertheless, the constant $C_1 = C_1 (L)$ blows up as $\mbox{dist} (L, \cN) \to 0$. This blow-up can be quantified. Similar facts hold for $c_2$ and $C_2$. See \Cref{rem-pro1}. 
\end{remark}

\begin{remark} \rm In \cite{KX21}, the authors showed that there exists a positive constant computable as a function of $L$ when $L \not \in \cN$ for the observability inequality of \eqref{sys-KdV} when $T \ge \tau_0$ for some sufficiently large $\tau_0$. Moreover, the largeness of $\tau_0$
can also be estimated explicitly as a function of $L$. Assertion on the observability is equivalent to the fact that for $L \not \in \cN$ and for $T \ge \tau_0$,  there exists a positive constant $C_L$ computable as a function of $L$ such that for all $y_0 \in L^2(0, L)$, there exists $u \in L^2 (0, T)$ such that $y(T, \cdot) = 0$ in $(0, L)$ and 
$$
\| u \|_{L^2(0, T)} \le C_L \| y_0\|_{L^2(0, T)}. 
$$
This result follows directly from Assertion a) of \Cref{thm-LN}. In \cite{KX21}, the authors also discuss the behaviour of this constant when $L$ is close to a critical length.    
\end{remark}

\section{The cost of fast controls of the nonlinear KdV system - Proof of \Cref{thm-NL}} \label{sect-thm-NL}

\noindent {\it Step 1}: Proof of Assertion $a)$. It suffices to prove that for any $y_T \in L^2(0, L)$ with $\| y_T \|_{L^2(0, L)} \le \eps$ there exists a control $u \in L^2(0, T)$ which brings $y_0 = 0$ at time $0$ to $y_T$ at time $T$ such that 
$$
\| u \|_{L^2(0, T)} \le C e^{\frac{c}{T^{1/2}}} \| y_0 \|_{L^2(0, L)}. 
$$
In what follows of Step 1, we will prove this fact.

We first involve the Hilbert uniqueness principle. Since the system is controllable in any positive time, we have 
\be
\int_{0}^T |\Phi_x(t, L)|^2 \ge C \int_0^L |\Phi(T, x)|^2 \, d x, 
\ee
for some positive constant $C = C(T, L)$. Here $\varphi \in X_T$ is the unique solution of the backward linearized KdV system 
\be \label{sys-KdV-varphi}
\left\{ \begin{array}{cl}
\Phi_t + \Phi_x + \Phi_{xxx} = 0 \mbox{ in } (0, T) \times (0, L), \\[6pt]
\Phi(\cdot, 0) = \Phi(\cdot, L) = \Phi_x(\cdot, 0)= 0 \mbox{ in } (0, T), \\[6pt]
\Phi(T,  \cdot) \in L^2(0, L). 
\end{array} \right. 
\ee
Moreover, given $y_T \in L^2(0, T)$, one can choose a control of the form 
$$
u(t) = \xi_x(t, L) \mbox{ in } (0, T), 
$$
where $\xi \in X_T$ is a solution of \eqref{sys-KdV-varphi} which is chosen such that 
\be
\int_0^T \varphi_x (t, L) \xi_x(t, L) \, dt =  \int_0^L y_T (x) \varphi(T, x) \, dx
\ee
for all solution $\varphi \in X_T$ of \eqref{sys-KdV-varphi}. Moreover, there is a unique solution $\xi \in X_T$ of \eqref{sys-KdV-varphi}
satisfying this property since $L \not \in \cN$. We also have 
$$
\int_0^T |\xi_x(t, L) |^2 \, dt \le \int_0^T |u(t)|^2 \, dt 
$$
for every control $u \in L^2(0, T)$ steering $0$ at time $0$ to $y_T$ at time $T$. 

Define 
$$
\cL : L^2(0, L) \to L^2(0, T)
$$
by
$$
\cL y_T = \xi_x(t, L). 
$$
Then $\cL$ is linear and continuous. Moreover, by \Cref{thm-LN}, we have
$$
\| \cL \| \le C_2 e^{\frac{c_2}{T^{\frac{1}{2}}}}.  
$$

Given $v \in L^2(0, T)$ with small norm, we denote $\cP v$ the solution at time $T$ of the nonlinear KdV system \eqref{sys-KdV-NL} with the control $v$ and with zero initial data.  In what follows, we assume that $y_T \not \equiv 0$ in $(0, L)$ since there is nothing to prove otherwise. Consider the map, with $\eps = \| y_T \|_{L^2(0, T)}/4$,  
$$
\cG : \Big\{\ty_T \in L^2(0, L); \|\ty_T - y_T \|_{L^2(0, L)} \le \eps \Big\} \to L^2(0, L),  
$$
defined by 
$$
\cG(\ty_T) = \ty_T - \cP \cL \ty_T + y_T. 
$$

One has 
$$
\|\cG (\ty_T) - y_T\|_{L^2(0, L)} = \| \ty_T - \cP \cL \ty_T \|_{L^2(0, L)} \le C \| \ty_T \|_{L^2(0, T)}^2. 
$$
Therefore, for small $\eps$,  
$$
\cG : \Big\{\ty_T \in L^2(0, L); \|\ty_T - y_T \|_{L^2(0, L)} \le \eps \Big\} \to  \Big\{\ty_T \in L^2(0, L); \|\ty_T - y_T \|_{L^2(0, L)} \le \eps \Big\}. 
$$\

We next prove that $\cG$ is a contracting map. Indeed, for $\ty_T, \hy_T \in L^2(0, L)$ with $\|\ty_T - y_T \|_{L^2(0, L)} \le \eps$ and $\|\hy_T - y_T \|_{L^2(0, L)} \le \eps$, we have 
$$
\cG (\ty_T) - \cG (\hy_T) =  \ty_T - \cP \cL \ty_T - \Big( \hy_T - \cP \cL \hy_T \Big) = \ty_T - \hy_T - \Big( \cP \cL \ty_T- \cP \cL \hy_T \Big). 
$$
Let $\tz, \hz \in X_T $ be the solutions of the system 
\be 
\left\{ \begin{array}{cl}
\tz_{t} + \tz_{x} + \tz_{xxx}  + \tz \tz_x =  0  \mbox{ in } (0, T) \times (0, L), \\[6pt]
\tz(\cdot, 0) = \tz(\cdot, L) = 0 \mbox{ in } (0, T), \\[6pt]
\tz_{x}(\cdot, L)  = \cL \ty_T \mbox{ in } (0, T), \\[6pt]
\tz(0,  \cdot) = 0 \mbox{ in } (0, L), 
\end{array} \right. 
\ee
\be
\left\{ \begin{array}{cl}
\hz_{t} + \hz_{x} + \hz_{xxx} + \hz \hz_x =  0  \mbox{ in } (0, T) \times (0, L), \\[6pt]
\hz(\cdot, 0) = \hz(\cdot, L) = 0 \mbox{ in } (0, T), \\[6pt]
\hz_{x}(\cdot, L)  = \cL \hy_T \mbox{ in } (0, T), \\[6pt]
\hz(0,  \cdot) = 0 \mbox{ in } (0, L), 
\end{array} \right. 
\ee
and denote $z \in X_T$ the solution of the system 
\be \label{thm-NL-Step1-z}
\left\{ \begin{array}{cl}
z_{t} + z_{x} + z_{xxx} =  \tz \tz_x - \hz \hz_x   \mbox{ in } (0, T) \times (0, L), \\[6pt]
z(\cdot, 0) = z(\cdot, L) = 0 \mbox{ in } (0, T), \\[6pt]
z_{x}(\cdot, L)  = 0  \mbox{ in } (0, T), \\[6pt]
z(0,  \cdot) = 0 \mbox{ in } (0, L). 
\end{array} \right. 
\ee
We derive that 
\be
\tz(T, \cdot) = \cP \cL \ty_T, \quad \hz(T, \cdot) = \cP \cL \hy_T, \quad \mbox{ and } \quad z(T, \cdot) = \ty_T - \hy_T - \Big( \cP \cL \ty_T- \cP \cL \hy_T \Big). 
\ee
Applying \Cref{lem-KdV2}, we derive from \eqref{thm-NL-Step1-z} that 
\be
\| z\|_{X_T} \le C \| \tz \tz_x - \hz \hz_x \|_{L^1((0, T); L^2(0, L))}, 
\ee
which yields 
\be \label{thm-NL-Step1-z-est}
\| z\|_{X_T} \le C \big(\|\tz  - \hz \|_{X_T} \big)\big(\|\tz\|_{X_T}  + \| \hz \|_{X_T} \big). 
\ee
Since 
$$
\| \tz \|_{X_T} \le C \eps \quad \mbox{ and } \quad \| \hz \|_{X_T} \le C \eps, 
$$
we derive form \eqref{thm-NL-Step1-z-est} that $\cG$ is contracting for $\eps$ sufficiently small. There thus exists $\ty_T \in \Big\{\ty_T \in L^2(0, L); \|\ty_T - y_T \|_{L^2(0, L)} \le \eps \Big\}$ such that 
\be
\cG (\ty_T) = \ty_T. 
\ee
This is equivalent to say that 
$$
\cP \cL \ty_T = y_T. 
$$
Assertion a) then  follows. 

\medskip 

\noindent{\it Step 2:} Proof of Assertion $b)$ with $\varphi = \varphi_1$. Let $y_1, y_2 \in X_T$ be the unique solution of the following systems 
\be 
\left\{ \begin{array}{cl}
y_{1, t} + y_{1, x} + y_{1, xxx} =  0  \mbox{ in } (0, T) \times (0, L), \\[6pt]
y_1(\cdot, 0) = y_1(\cdot, L) = 0 \mbox{ in } (0, T), \\[6pt]
y_{1, x}(\cdot, L)  = u (t) \mbox{ in } (0, T), \\[6pt]
y_1(0,  \cdot) = y_0 \mbox{ in } (0, L), 
\end{array} \right. 
\ee
\be
\left\{ \begin{array}{cl}
y_{2, t} + y_{2, x} + y_{2, xxx} = - y y_x  \mbox{ in } (0, T) \times (0, L), \\[6pt]
y_2(\cdot, 0) = y_2(\cdot, L) = 0 \mbox{ in } (0, T), \\[6pt]
y_{2, x}(\cdot, L)  = 0 \mbox{ in } (0, T), \\[6pt]
y_2(0,  \cdot) = 0 \mbox{ in } (0, L). 
\end{array} \right. 
\ee
Then 
\be \label{thm-NL-Step2-y1} 
y = y_1 + y_2. 
\ee
Let $u_2 \in L^2(0, T)$ be a control which brings $0$ at time $0$ to $y_2(T, \cdot)$ at time $T$ such that 
\be \label{thm-NL-Step2-p1} 
\| u_2 \|_{L^2(0, T)} \le C_1 e^{\frac{c_1}{T^{1/2}}} \| y_2 (T, \cdot) \|_{L^2(0, L)}. 
\ee
Such an $u_2$ exists by \Cref{thm-LN}. 

Let $\ty \in X_T$ be the unique solution of the system 
\be 
\left\{ \begin{array}{cl}
\ty_{t} + \ty_{x} + \ty_{xxx} =  0  \mbox{ in } (0, T) \times (0, L), \\[6pt]
\ty(\cdot, 0) = \ty(\cdot, L) = 0 \mbox{ in } (0, T), \\[6pt]
\ty_{x}(\cdot, L)  = u (t) + u_2 (t) \mbox{ in } (0, T), \\[6pt]
\ty (0,  \cdot) = y_0 \mbox{ in } (0, L). 
\end{array} \right. 
\ee
From the required control property of $u_2$, one derives that 
$$
\ty(T, x) = y_1(T, x)  + y_2 (T, x) \mathop{=}^{\eqref{thm-NL-Step2-y1}} y(T, x) = 0 \mbox{ in } (0, L).   
$$
It follows from \Cref{thm-LN} that 
$$
\| u + u_2 \|_{L^2(0, T)} \ge C_2 e^{\frac{c_2}{T^{1/2}}} \| y_0 \|_{L^2(0, L)}. 
$$
This yields 
\begin{multline} \label{thm-NL-Step2-p2} 
\| u\|_{L^2(0, T)} \ge C_2 e^{\frac{c_2}{T^{1/2}}} \| y_0 \|_{L^2(0, L)} - \| u_2 \|_{L^2(0, T)} \\[6pt]
\mathop{\ge}^{\eqref{thm-NL-Step2-p1}} C_2 e^{\frac{c_2}{T^{1/2}}} \| y_0 \|_{L^2(0, L)}  - C_1 e^{\frac{c_1}{T^{1/2}}} \| y_2 (T, \cdot) \|_{L^2(0, L)}. 
\end{multline}
On the other hand, we have, by \Cref{lem-KdV2},  
\be \label{thm-NL-Step2-p3} 
\| y_2 (T, \cdot) \|_{L^2(0, L)} \le C \| y y_x \|_{L1((0, T); L^2(0, L))}  \le  C \| y \|_{X_T}^2 \le C \big(\| u\|_{L^2(0, T)} + \| y_0 \| \big)^2, 
\ee
The conclusion now follows from \eqref{thm-NL-Step2-p2} and \eqref{thm-NL-Step2-p3} for $\eps_1$ sufficiently small. 

The proof is complete. \qed

\appendix 

\section{Proof of \Cref{pro1}}

This section, consisting of two subsections, is organized as follows. In the first subsection, we present some results used in the proof of \Cref{pro1}. The proof of \Cref{pro1} is given in the second subsection. 

\subsection{Some useful lemmas}

The first result of this section is a variant of \cite[Lemma 4.1]{Ng-cost-fractional-SH} with a similar proof.  

\begin{lemma}  \label{lem-Phi}  Let $\alpha > 1$, $a, b  > 0$,  and let $(\lambda_{1, k})_{k \ge 1}$ and $(\lambda_{2, k})_{k \ge 1}$ be two positive sequence. 
Assume that $\gamma > 0$ and $\Gamma_1, \Gamma_2 < + \infty$ where 
\be \label{lem-Phi-assumption}
\begin{array}{c}
\gamma = \inf_{k \neq n} \min\Big\{ |\lambda_{1, k} - \lambda_{1, n}|, |\lambda_{2, k} - \lambda_{2, n}| \Big\} , \quad  \Gamma_1 = \sup_{k} \max\left\{ \frac{|\lambda_{1, k} - a k^\alpha|}{k^{\alpha - 1}}, \frac{|\lambda_{2, k} - b k^\alpha|}{k^{\alpha - 1}}\right\}, \\[6pt]\Gamma_2 = \sup_{k} \max\left\{ \frac{k^\alpha}{\lambda_{1, k}}, \frac{k^\alpha}{\lambda_{2, k}} \right\}.  
\end{array}
\ee
Set, for $k \ge 1$,  
$$
\lambda_k = \lambda_{1, k}  \quad \mbox{ and } \quad \lambda_{-k}  = - \lambda_{2, k}, 
$$
and, for $n \in \mZ \setminus \{0 \}$,  
$$
\Phi_n (z) =\prod_{k \in \mZ \setminus \{0, n\}} \left( 1 - \frac{z}{\lambda_k - \lambda_n} \right) \mbox{ for } z \in \mC. 
$$
We have 
\be \label{lem-Phi-cl2}
|\Phi_n (z)| \le C e^{c  |z|^{\frac{1}{\alpha}}}  \mbox{ for large $|z|$ with $z \in \mC$,}   
\ee
for some positive constants $c$ and  $C$ depending only on $a$, $\gamma$, $\Gamma_1$, and $\Gamma_2$. \end{lemma}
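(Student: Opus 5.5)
We bound $\log|\Phi_n(z)|$ by a counting-function argument. Taking logarithms and using $\log|1-w|\le\log(1+|w|)$,
$$
\log|\Phi_n(z)| \le \sum_{k\in\mZ\setminus\{0,n\}} \log\Big(1+\frac{|z|}{|\lambda_k-\lambda_n|}\Big).
$$
Everything then reduces to estimating the distances $|\lambda_k-\lambda_n|$ from below, uniformly in $n$, by a quantity comparable to $|k^\alpha - n^\alpha|$ (with the appropriate sign conventions), plus a bounded-from-below term of size $\gamma$.

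\emph{Step 1: lower bounds on the gaps.} Fix $n\ge1$; the case $n\le -1$ is symmetric after swapping the roles of the two sequences and of $a,b$.

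For $k\le -1$, i.e. $k=-m$ with $m\ge1$, the eigenvalues have opposite signs. By the definition of $\Gamma_2$,
$$
|\lambda_k-\lambda_n| = \lambda_{2,m}+\lambda_{1,n} \ge \Gamma_2^{-1}(m^\alpha+n^\alpha)\ge \Gamma_2^{-1} m^\alpha.
$$

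For $k\ge1$, $k\neq n$, we have $\lambda_k-\lambda_n = a(k^\alpha-n^\alpha)+O(\Gamma_1(k^{\alpha-1}+n^{\alpha-1}))$. Since $|k^\alpha-n^\alpha|\ge c_\alpha |k-n|\max(k,n)^{\alpha-1}$ with $\alpha>1$, we get
$$
|\lambda_k-\lambda_n|\ge \tfrac{a}{2}\,|k^\alpha-n^\alpha| \quad\text{whenever } |k-n|\ge M,
$$
where $M$ depends only on $a,\alpha,\Gamma_1$. For the finitely many $|k-n|<M$ we simply use $|\lambda_k-\lambda_n|\ge\gamma$. In all cases this yields
$$
|\lambda_k-\lambda_n|\ge c\,\max\big(\gamma,\ |k-n|\max(k,n)^{\alpha-1}\big)\ge c'|k-n|^\alpha .
$$
The last inequality holds because $\max(k,n)\ge|k-n|$ when both indices are positive, and for $|k-n|<M$ the term $|k-n|^\alpha$ is bounded, so $\gamma$ dominates after adjusting constants.

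\emph{Step 2: summation.} With $r=|z|$, the sum is at most
$$
2\sum_{j\ge1}\log\Big(1+\frac{r}{c'j^\alpha}\Big) + \sum_{m\ge1}\log\Big(1+\frac{\Gamma_2 r}{m^\alpha}\Big).
$$
This uses that each value $j=|k-n|$ is attained by at most two indices $k\ge1$. Each of these sums has the form $\sum_{j\ge1}\log(1+A r j^{-\alpha})$. Compare it with $\int_0^\infty \log(1+Ar t^{-\alpha})\,dt$ (the summand is decreasing in $j$) and substitute $t=(Ar)^{1/\alpha}s$:
$$
\int_0^\infty \log(1+Ar t^{-\alpha})\,dt = (Ar)^{1/\alpha}\int_0^\infty\log(1+s^{-\alpha})\,ds .
$$
The last integral is finite exactly because $\alpha>1$: it behaves like $\log$ near $0$ and like $s^{-\alpha}$ at infinity. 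Hence $\log|\Phi_n(z)|\le c\,r^{1/\alpha}+C$, which is \eqref{lem-Phi-cl2}. The constants depend on $a,b,\gamma,\Gamma_1,\Gamma_2,\alpha$ (with $b$ entering symmetrically for $n<0$).

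\emph{Main obstacle.} The delicate point is Step 1 for $k,n$ positive and close, where the asymptotic $ak^\alpha$ alone does not separate them. The remedy is the split at $|k-n|=M$, using the uniform gap $\gamma$ on the near range. We must also check that $M$ does not depend on $n$. This follows because the error term $\Gamma_1(k^{\alpha-1}+n^{\alpha-1})$ is dominated by $a|k-n|\max(k,n)^{\alpha-1}$ as soon as $|k-n|$ exceeds a constant, uniformly in $n$.
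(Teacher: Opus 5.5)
Your argument is correct, but it is organized differently from the paper's.

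\textbf{The paper's route.} The paper splits into the two regimes $|\lambda_n|<|z|$ and $|z|\le|\lambda_n|$. In each regime it separates ``far'' indices from ``near'' ones. For far indices it uses $\log(1+x)\le x$ and sums $|z|/|\lambda_k-\lambda_n|$. For near indices it uses an $n$-dependent gap bound of the type $|\lambda_k-\lambda_n|\gtrsim|n|^{\alpha-1}|k-n|$, together with $\int_1^{a}\ln(1+a/t)\,dt\le Ca$. The regime condition is what finally converts quantities such as $|z|/|n|^{\alpha-1}$ into $|z|^{1/\alpha}$.

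\textbf{Your route.} You discard the $n$-dependence at the outset via $\max(k,n)\ge|k-n|$. This gives the uniform bounds $|\lambda_k-\lambda_n|\ge c'|k-n|^\alpha$ for indices of the same sign and $\ge\Gamma_2^{-1}m^\alpha$ across signs. Everything then collapses to the single model sum $\sum_{j\ge1}\log(1+Arj^{-\alpha})$, and scaling gives $(Ar)^{1/\alpha}\int_0^\infty\log(1+s^{-\alpha})\,ds$ directly.

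\textbf{Comparison.} Your route yields a bound valid for all $z$, uniform in $n$, with no additive constant and no case analysis. It also makes explicit the role of $\gamma$: it controls the boundedly many indices with $|k-n|<M$, which the paper's sketch leaves implicit. The paper's finer gap bound retains more information for large $|n|$, giving something of order $|z|/|n|^{\alpha-1}$ when $|z|\ll|\lambda_n|$. The lemma does not need that extra information.

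\textbf{A small slip.} The intermediate inequality $|\lambda_k-\lambda_n|\ge c\max\bigl(\gamma,|k-n|\max(k,n)^{\alpha-1}\bigr)$ is not available for $|k-n|<M$. There the hypotheses only give $\ge\gamma$, and when $\Gamma_1$ is large relative to $a$ they do allow neighbouring eigenvalues to stay at bounded distance. However, the inequality you actually use, $\ge c'|k-n|^\alpha$, follows from $\gamma\ge\gamma M^{-\alpha}|k-n|^\alpha$, exactly as your own justification says. So nothing is lost.

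\textbf{A remark on constants.} Since $\Gamma_2^{-1}\le\lambda_{1,k}/k^\alpha\to a$, and likewise for $b$, one has $a,b\ge\Gamma_2^{-1}$. Hence the dependence of your $M$ and $c'$ on $a$ and $b$ can be absorbed into $\Gamma_2$. Apart from the obvious dependence on $\alpha$, this matches the parameter list in the statement.
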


\begin{proof}  We only prove \eqref{lem-Phi-cl2} for $|z|> 1$. The other case is simpler and omitted. In what follows in this proof, $C$ denotes a positive constant depending only on $a$, $\gamma$, $\Gamma_1$, and  $\Gamma_2$, and can change from one place to another. 
Given $z \in \mC$ with $|z| > 1$, let $m \in \N$ be such that $m^\alpha \le |z| < (m+1)^\alpha$. 

In what follows, we  consider $n, k \in \mZ \setminus \{0\}$.  We first consider the case $|\lambda_n| <  |z|$. We have
\begin{multline}\label{lem-Phi-p2-1}
\ln |\Phi_n (z)| \le C \sum_{|\lambda_k-\lambda_n|  \ge C |z|} \frac{|z|}{|\lambda_k-\lambda_n|} + C \sum_{|\lambda_k-\lambda_n|  < C |z|} \ln \left(1 +  \frac{|z|}{|\lambda_k - \lambda_n|} \right) \\[6pt] 
\le \frac{C |z|}{|z|^{\frac{\alpha - 1}{\alpha}}} + C \sum_{1 \le n^{\alpha-1} m \le C |z|}  \ln \left( 1+  \frac{C |z|}{n^{\alpha-1}m} \right) + C.   
\end{multline}
We have, with $a = C |z|/ n^{\alpha-1} \ge 1$,  
\begin{multline}\label{lem-Phi-p2-2}
\int_1^{a} \ln (1 + a/ t) \, dt = \int_1^{a} \Big( \ln (a + t) - \ln t \Big) \, dt = \int_{1 + a}^{2a} \ln t \, dt - \int_1^{a} \ln t \, dt \\[6pt]
=  \int_{a+1}^{2a} \ln t \, dt -\int_1^{a} \ln t \, dt  = 2 a \ln (2a) - (a+1) \ln (1 + a) - a \ln a - \int_{a+1}^{2a} 1 \, dt + \int_1^{a} 1 \, dt 
\le C a. 
\end{multline}
Combining \eqref{lem-Phi-p2-1} and \eqref{lem-Phi-p2-2} yields 
$$
\ln |\Phi_n (z)| \le C  |z|^{1/ \alpha} + C,  
$$
which implies \eqref{lem-Phi-cl2}.

We next deal with the case $|z| \le |\lambda_n|$. We have
\begin{multline}\label{lem-Phi-p2-3}
\ln |\Phi_n (z)| \le C \sum_{|\lambda_k|  \ge 2 |\lambda_n| } \frac{|z|}{|\lambda_k|} + C \sum_{|\lambda_k|  \le 2 |\lambda_n|} \ln \left(1 +  \frac{|z|}{|\lambda_k - \lambda_n|} \right) + C \\[6pt] 
\le \frac{C |z|}{n^{\alpha-1}} + C \sum_{1 \le m \le C n}  \ln \left( 1+  \frac{C |z|}{n^{\alpha-1} m} \right) + C.   
\end{multline}
We have, with $a = C |z|/ n^{\alpha-1} $ and $\hat n = C n$,  
\begin{multline}\label{lem-Phi-p2-4}
\int_1^{\hat n} \ln (1 + a/ t) \, dt = \int_1^{\hat n} \Big( \ln (a + t) - \ln t \Big) \, dt = \int_{1 + a}^{\hat n+ a} \ln t \, dt - \int_1^{\hat n} \ln t \, dt \\[6pt]
=  \int_{\hat n}^{\hat n+ a} \ln t \, dt -\int_1^{1+ a} \ln t \, dt  = (\hat n+ a) \ln (\hat n+a) - \hat n \ln \hat n  - (a+1) \ln (1 + a)  - \int_{\hat n}^{\hat n+a} 1 \, dt + \int_1^{1+ a} 1 \, dt 
\le C a. 
\end{multline}
Combining \eqref{lem-Phi-p2-3} and \eqref{lem-Phi-p2-4} yields, since $|z| \le |\lambda_n|$,  
$$
\ln |\Phi_n (z)| \le C  |z|^{1/ \alpha} + C,  
$$
which implies \eqref{lem-Phi-cl2}. 
\end{proof}

We next recall the following result in \cite{Ng-cost-fractional-SH} (see also \cite{TT07}).

\begin{lemma}\label{lem-H} Let $\mu > 0$, $\nu \ge 1$, and $0 < \beta \le 1 $. Set  
$$
H(z) = \int_{-1}^1 \sigma(t) e^{- i \beta z t } \, dt \mbox{ for } z \in \mC, 
$$
where 
$$
\sigma (t) = e^{- \frac{\nu^\mu}{(1 - t)^\mu} -  \frac{\nu^\mu}{(1 + t)^\mu}} \mbox{ for } t \in (-1, 1).  
$$
We have 
\be \label{lem-H-cl1}
|H(x)| \le C e^{- c |\beta \nu x|^{\frac{\mu}{\mu + 1}}} \mbox{ for } x \in \mR, 
\ee
for some positive constants $C, c$ depending only on $\mu$. 
\end{lemma}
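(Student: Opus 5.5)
The plan is a stationary phase / saddle point estimate for the Fourier transform of the smooth compactly supported weight $\sigma$, whose derivatives are controlled in a Gevrey class.

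\emph{Reductions.} Since $\sigma$ is even and real, $H(x)=\int_{-1}^1\sigma(t)\cos(\beta x t)\,dt$. Set $\xi=\beta x$. It then suffices to show
\[
\Big|\int_{-1}^1 \sigma(t)e^{-i\xi t}\,dt\Big|\le Ce^{-c|\nu\xi|^{\mu/(\mu+1)}}
\]
for all real $\xi$. For $|\nu\xi|\le 1$ the trivial bound $|H|\le 2\sup\sigma\le 2$ already suffices, so we may assume $|\nu \xi|$ is large and, by symmetry, $\xi>0$.

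\emph{Contour deformation.} The first route is to shift the integration contour into the complex plane. The function $\sigma$ extends holomorphically to $\mC\setminus\big((-\infty,-1]\cup[1,\infty)\big)$, and $\sigma$ and all its derivatives vanish at $\pm1$ from inside the appropriate sector. We deform $[-1,1]$ to the path
\[
-1\;\to\; -1-i\delta\;\to\; 1-i\delta\;\to\; 1
\]
(for $\xi>0$ the factor $e^{-i\xi t}$ decays when $\operatorname{Im}t<0$), choosing the path so that near $\pm1$ it stays in the sector $|\arg(1\mp t)|<\pi/(4\mu)$, where $\operatorname{Re}(1\mp t)^{-\mu}>0$.

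\emph{Estimates.}
\begin{itemize}
\item On the horizontal segment, $|e^{-i\xi t}|=e^{-\xi\delta}$.
\item Near the endpoints, with $s=1-t$ of modulus $r$, the integrand is bounded by $\exp\big(-c\nu^\mu r^{-\mu}-\xi r\sin\theta\big)$.
\end{itemize}
Optimizing $\nu^\mu r^{-\mu}+\xi r$ over $r$ gives $r\sim(\nu^\mu/\xi)^{1/(\mu+1)}$ and a minimal value of order $\nu^{\mu/(\mu+1)}\xi^{\mu/(\mu+1)}=(\nu\xi)^{\mu/(\mu+1)}$. This is exactly the claimed exponent. Accordingly, take $\delta$ comparable to that $r$, capped by a small constant, and run the endpoint pieces as rays at a fixed angle.

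\emph{Case split.} The case where the optimal $r$ exceeds $1$ has to be handled separately. There $\xi\lesssim\nu^\mu$, and we use instead $\sigma\le e^{-2\nu^\mu}$ on $[-1,1]$ together with the bound $\nu^\mu\ge(\nu\xi)^{\mu/(\mu+1)}\cdot c$. This inequality holds because $\nu\ge1$ and $\xi\lesssim\nu^\mu$ give $\nu\xi\lesssim\nu^{\mu+1}$.

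\emph{Fallback.} If the contour geometry becomes awkward for large $\mu$, where the sector is narrow, the alternative is the standard Gevrey route. Bound
\[
|\xi|^N|H|\le\int|\sigma^{(N)}|\le C^N N^{N(1+1/\mu)}\nu^{-N}
\]
by Cauchy estimates on discs of radius $\sim\operatorname{dist}(t,\pm1)^{1+\mu}/\nu^\mu$, where $\sigma$ is essentially the same size. Then optimize over $N\sim(\nu\xi)^{\mu/(\mu+1)}$.

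\emph{Main obstacle.} The key point is the derivative bound $\sup_t|\sigma^{(N)}(t)|\le C^N\nu^{-N}N^{N(\mu+1)/\mu}$ with constants depending only on $\mu$. This is what I expect to be the main work. The Cauchy-disc argument gives it after maximizing $e^{-\nu^\mu d^{-\mu}}(\nu^\mu/d^{1+\mu})^{N}N!$ over the distance $d$ to the boundary.
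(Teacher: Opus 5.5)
The paper gives no proof of this lemma; it only recalls it from the works it cites. Your argument therefore has to stand on its own, and its main line does.

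\textbf{The contour shift works for every $\mu>0$.} The worry about large $\mu$ is unfounded. Fix $\theta\in(0,\min\{\pi/2,\pi/(2\mu)\})$. Use the rays $t=1-re^{i\theta}$ and $t=-1+re^{-i\theta}$ down to $\operatorname{Im}t=-\delta$, joined by the horizontal segment.
\begin{itemize}
\item On the rays, $|\sigma(t)e^{-i\xi t}|\le\exp\bigl(-\cos(\mu\theta)\,\nu^\mu r^{-\mu}-\sin\theta\,\xi r\bigr)\le\exp\bigl(-c_\mu(\nu\xi)^{\mu/(\mu+1)}\bigr)$.
\item On the segment, $|\arg(1-t)|,|\arg(1+t)|\le\theta$, so $|\sigma|\le 1$ and the contribution is at most $2e^{-\xi\delta}$.
\end{itemize}
Geometrically you only need $\delta\cot\theta\le 1/2$. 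This keeps the rays from meeting, and keeps the factor from the far endpoint of modulus at most $1$ on each ray. A small $\theta$ only degrades the constants, which still depend on $\mu$ alone.

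For the same reason, make the case split at whatever fixed $\mu$-dependent threshold $\delta_0$ this requires, not at $r=1$. The split closes for any such $\delta_0$: if $(\nu^\mu/\xi)^{1/(\mu+1)}\ge\delta_0$, then $(\nu\xi)^{\mu/(\mu+1)}\le\delta_0^{-\mu}\nu^\mu$, and $|H|\le 2e^{-2\nu^\mu}$ suffices.

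Compared with the paper's bare citation, this is a short self-contained proof in which the $\mu$-dependence of $c$ and $C$ is explicit. It also uses neither $\nu\ge 1$ nor $\beta\le 1$.

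\textbf{Your fallback and the ``main obstacle'' are wrong as stated,} although you do not need them. Take Cauchy discs of fixed radius $d^{1+\mu}/\nu^\mu$. The substitution $u=\nu^\mu d^{-\mu}$ turns $N!\,(\nu^\mu d^{-1-\mu})^N e^{-\nu^\mu d^{-\mu}}$ into $\nu^{-N}N!\,u^{N(\mu+1)/\mu}e^{-u}$. For large $N$ its supremum is of order $C^N\nu^{-N}N^{N(2\mu+1)/\mu}$, not $C^N\nu^{-N}N^{N(\mu+1)/\mu}$. The extra factor $N!$ survives, and optimizing in $N$ would then give only the exponent $\mu/(2\mu+1)$.

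To get the sharp Gevrey bound, the radius must be allowed to be comparable to $d$. On discs of radius $\epsilon_\mu d$ one still has $|\sigma|\le e^{-c_\mu\nu^\mu d^{-\mu}}$. Hence $|\sigma^{(N)}(t)|\le N!\,(\epsilon_\mu d)^{-N}e^{-c_\mu\nu^\mu d^{-\mu}}\le C_\mu^N\nu^{-N}N^{N(\mu+1)/\mu}$.
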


\subsection{Proof of \Cref{pro1}} Multiplying the equation of $y$ in \eqref{sys-KdV-P} by $e^{i \lambda_k t}\varphi_k (x)$ and integrating by parts, one obtains 
\be
\int_0^L \Big(  y(T, x) e^{i \lambda_k T}\varphi_k (x) - y(0, x) \varphi_k (x) \Big) \, dx = \int_0^T v(t) e^{i \lambda_k t} \varphi_{k, x} (L) \, dt 
\ee
Thus $v \in L^2(0, T)$ is a control which steers $y_0$ at time $0$ to $0$ at time $0$ if and only if  
\be \label{pro1-Step1-p1}
 \int_0^T v(t) e^{i \lambda_k t} \, dt = - \frac{1}{\varphi_{k, x}(L)} \int_0^L \varphi_k (x) y_0 (x) \, dx \mbox{ for } k \in \mZ \setminus \{0 \}. 
\ee
Define $w(t)= v(t + T/2)$ for $t \in (-T/2, T/2)$. Then \eqref{pro1-Step1-p1} is equivalent to the fact 
\be \label{pro1-Step1-p2}
 \int_{-T/2}^{T/2} w(t) e^{i \lambda_k t} \, dt = - \frac{e^{- i \lambda_k T/2}}{\varphi_{k, x}(L)} \int_0^L \varphi_k (x) y_0 (x) \, dx. 
\ee

\noindent {\it Step 1:} Proof of Assertion a) of \Cref{pro1}. 

We search a function $W$ such that 
$$
W(- \lambda_k ) = c_k \mbox{ for } k \in \mZ \setminus \{0\},
$$
where 
\be
W(z) = \int_{-T/2}^{T/2} w(t) e^{- i z t} \, dt
\ee
and 
\be. 
c_k =   - \frac{e^{- i \lambda_k T/2}}{\varphi_{k, x}(L)}. 
\ee
We have, by \eqref{cond-N-2}, 
\be \label{pro1-Step1-p0}
|c_k| \le \frac{C}{|k|}.
\ee

Denote 
\be
\mu = 1/2 \quad \mbox{ and } \quad \alpha = 3. 
\ee
Set, for $n \in \mZ \setminus \{0\}$, 
$$
\Psi_n (z) = \Phi_n (z - \lambda_n) = \prod_{k \neq n} \left(\frac{1 - z/\lambda_k}{1 - \lambda_n/ \lambda_k} \right) \mbox{ for } z \in \mC, 
$$
and 
$$
g_n (z) = \Psi_n(-z) H (z + \lambda_n) \mbox{ for } z \in \mC,  
$$
where, with $\beta = T/2$ and $\nu \beta = \gamma > 0$ sufficiently large but fixed, 
$$
H(z) = \alpha_0 \int_{-1}^1 \sigma(t) e^{- i \beta z t } \, dt \mbox{ with } 
\sigma (t) = e^{- \frac{\nu^\mu}{(1 - t)^\mu} -  \frac{\nu^\mu}{(1 + t)^\mu}} \mbox{ for } t \in (-1, 1),
$$
where $\alpha_0$ is chosen such that $H(0) = 1$.  

Then 
$$
\alpha_0 \le C e^{2 \nu^\mu} = C e^{2 \nu^{\frac{1}{\alpha - 1}}}
$$
and 
$$
g_n(-\lambda_k)  = \delta_{n, k}. 
$$

We have, by  \Cref{lem-Phi,lem-H},  
$$
|g_n(x)| \le C e^{2 \nu^{\frac{1}{\alpha - 1}}} e^{ c_1 |x+ \lambda_n|^{\frac{1}{\alpha}}} e^{-c_2 ( \gamma |x + \lambda_n|) ^\frac{1}{\alpha}}  \mbox{ for } x \in \mR, 
$$
for some positive constant $c_1$, $c_2$, and $C$ independent of $n$. 

This implies, since $\gamma$ is sufficiently large (the largeness of $\gamma$ is independent of $n$), 
$$
|g_n(x)| \le C e^{2 \nu^{\frac{1}{\alpha - 1}}} e^{ - |x+ \lambda_n|^{\frac{1}{\alpha}}}  \mbox{ for } x \in \mR. 
$$

We now define 
$$
W(z) = \sum_{n \in \mZ \setminus\{0 \}} c_n g_n (z) \mbox{ for } z \in \mC. 
$$
Then, with $a_n = \langle y_0, \varphi_n \rangle_{L^2(0, L)}$,  
\be
\| W \|_{L^2(\mR)} \le \sum_{n \mZ \setminus\{0 \}} |c_n| \|g_n \|_{L^2(\mR)} \mathop{\le}^{\eqref{pro1-Step1-p0}}  C e^{2 \nu^{\frac{1}{\alpha - 1}}} \sum_{n \mZ \setminus\{0 \} } \frac{|a_n|}{|n|} \le C e^{2 \nu^{\frac{1}{\alpha - 1}}}  \| y_0\|_{L^2(0, 1)}. 
\ee
We also have, by  \Cref{lem-Phi}, 
$$
|W(z)| \le \sum_{n \mZ \setminus\{0 \}} |c_n| |g_n(z)| \le C_{T, \alpha} \sum_{n \mZ \setminus\{0 \} } |c_n| e^{|z| T/2} e^{c |z|^{\frac{1}{\alpha}}} \le C_{T, \alpha, \eps} e^{|z| (T/2 + \eps)}  \| y_0\|_{L^2(0, 1)} \mbox{ for } z \in \mC,  
$$
for all $\eps > 0$.  It follows from Paley-Wiener's theorem, see, e.g., \cite[19.3 Theorem]{Rudin-RC},  that there exists $w \in L^2(\mR)$ with $\supp w \subset [-T/2, T/2]$ such that 
\be
W(z) = \int_{-T/2}^{T/2} w(t) e^{- i z t} \, dt,
\ee
\be
\| w\|_{L^2(\mR)} \le C e^{ \frac{C}{T^{\frac{1}{\alpha - 1}}}} \| y_0\|_{L^2(0, 1)}, 
\ee
and 
\be
W(-\lambda_k) = c_k. 
\ee
The conclusion follows by taking $v(t) = w(t - T/2)$ for $t \in (0, T)$. \qed

\noindent {\it Step 2:} Proof of Assertion b) of \Cref{pro1}. This is a direct consequence of \cite[Lemma 3.2]{Ng-cost-fractional-SH}. 

The proof is complete. \qed 

\begin{remark} \rm \label{rem-pro1} Fix $0 < \ell_1 < \ell_2 < + \infty$ and let $\ell_1 \le L \le \ell_2$. Some comments on \Cref{pro1} are in orders. 

\begin{itemize}
\item The proof of Assertion a) of \Cref{pro1} shows that one can choose the constant $c_1$ independent of $L$ but depends on $\ell_1$ and $\ell_2$ (see the proof of \Cref{lem-Phi}). Nevertheless, the constant $C_1 = C_1 (L)$ blows up as $\mbox{dist} (L, \cN) \to 0$. This blow-up is quantified by the estimate \eqref{pro1-Step1-p0} (see also \eqref{cond-N-1} and \eqref{cond-N-2}) and can be obtained explicitly.   

\item The proof of Assertion b) of \Cref{pro1} shows that one can choose the constant $c_2$ independent of $L$ but depends on $\ell_1$ and $\ell_2$ (see the proof of \cite[Lemma 3.2]{Ng-cost-fractional-SH}). Nevertheless, the constant $C_2 = C_2 (L)$ blows up as $\mbox{dist} (L, \cN) \to 0$. This blow-up is quantified by the estimate 
\eqref{pro1-Step1-p2}, and $\varphi_1$ must be replaced by $\varphi_{k_0}$ for some appropriate $k_0$. 
\end{itemize}
\end{remark}

\end{document}